\documentclass[12pt]{amsart}
\usepackage[utf8]{inputenc}
\usepackage{amsmath,amsthm,amssymb,amsfonts,enumerate,color}
\usepackage{mathrsfs}
 \usepackage{amstext,amsxtra}
 \usepackage{txfonts} %also pxfonts
 \usepackage[colorlinks, citecolor=blue,pagebackref,hypertexnames=false]{hyperref}
 \usepackage{pgf,tikz}
  \usepackage{bbm}
    \usepackage{soul}

\allowdisplaybreaks

\def\zz{\mathbb{Z}}
\def\d{\mathrm{d}}
\def\n{\mathbf{n}}
\def\m{\mathbf{m}}

\begin{document}

\newcommand{\norm}[1]{\left\Vert#1\right\Vert}
\newcommand{\abs}[1]{\left\vert#1\right\vert}
\newcommand{\set}[1]{\left\{#1\right\}}
\newcommand{\Real}{\mathbb{R}}
\newcommand{\RR}{\mathbb{R}^n}
\newcommand{\supp}{\operatorname{supp}}
\newcommand{\card}{\operatorname{card}}
\renewcommand{\L}{\mathcal{L}}
\renewcommand{\P}{\mathcal{P}}
\newcommand{\T}{\mathcal{T}}
\newcommand{\A}{\mathbb{A}}
\newcommand{\K}{\mathcal{K}}
\renewcommand{\S}{\mathcal{S}}
\newcommand{\blue}[1]{\textcolor{blue}{#1}}
\newcommand{\red}[1]{\textcolor{red}{#1}}
\newcommand{\Id}{\operatorname{I}}

\newtheorem{thm}{Theorem}[section]
\newtheorem{prop}[thm]{Proposition}
\newtheorem{cor}[thm]{Corollary}
\newtheorem{lem}[thm]{Lemma}
\newtheorem{lemma}[thm]{Lemma}
\newtheorem{exams}[thm]{Examples}
\theoremstyle{definition}
\newtheorem{defn}[thm]{Definition}
\newtheorem{rem}[thm]{Remark}
\newtheorem{cj}[thm]{Conjecture}
\numberwithin{equation}{section}

\title[fractional discrete Laplacian \& Riesz potential]
{The derivative of the fractional discrete Laplacian \\ is an exotic Riesz potential}

\author[Bo Li, Qingze Lin, Huoxiong Wu]{Bo Li, Qingze Lin, Huoxiong Wu$^\ast$}
\thanks{$\ast$ Corresponding author}

\address{B. Li, College of Data Science, Jiaxing University,    Jiaxing 314001, China}
 \email{bli@zjxu.edu.cn}
%
%%\address{J. Li, School of Mathematics and Information Science, Henan Polytechnic University, Jiaozuo 454003, China}
%% \email{jinxiali@hpu.edu.cn}

  \address{Q. Lin, Department of Mathematics, Shantou University,
         Shantou 515063, China}
 \email{gdlqz@e.gzhu.edu.cn}

  \address{H. Wu, School of Mathematical Sciences, Xiamen University, Xiamen, 361005, China}
 \email{huoxwu@xmu.edu.cn}

%
%
%\address{M. Song, School of Mathematics and Statistics, Northwestern Polytechnical University, Xi'an,710129, China}
%\email{mlsong@nwpu.edu.cn}

%
%\address{School of Mathematics\\ Sun Yat-sen University \\
%         Guangzhou 510275, P.~R. China}
% \email{linqz@mail2.sysu.edu.cn}
 \subjclass[2010]{39A12, % Discrete version of topics in analysis
35R11, %Fractional partial differential equations
39A12.%Discrete version of topics in analysis,
 }
\keywords{Discrete harmonic analysis, Fractional Laplacian, Riesz potential}

\begin{abstract}
Let $\Delta_{N}$ be the multidimensional discrete Laplacian on $\mathbb{Z}^N$ ($N\ge1$).
In this note, we prove that,
when $N=1$,  the right hand derivative of $(-\Delta_1)^s$ at $0$ is an exotic discrete Riesz potential
(namely, the endpoint case: the order is 0) in Stein-Wainger sense (J. Anal. Math. 2000),
and when $N\ge 2$, the corresponding derivative is also an exotic discrete Riesz potential with an additional corrector.
A similar conclusion for the left hand derivative case is also considered.
All results obtained in this note extend the logarithmic Laplacian of Chen-Weth (Comm. PDEs. 2019) to the discrete setting.
%
%a stratified Lie group $\mathbb{G}$\,.\, In this note, by utilizing the functional calculus, we bypass the lack of the harmonicity of $-\Delta_{\mathbb{G}}$ and then extend the work of Fabes et al. in {\it Duke Math. J.} concerning the Carleson measure characterizations of $BMO$ spaces, which are not involving with time derivatives of heat kernels, to the case of stratified Lie groups.\, In particular, this partially answers a conjecture posed by Lin in {\it Proc. AMS} in an affirmative way.
 \end{abstract}

\maketitle

\section{\bf Introduction}\label{s1}

In recent years, a great deal of mathematical effort in boundary value problems involving (non-)linear integro-differential operators
has been devoted to the study of the fractional power of the Laplace operator.
For any $0<s<1$, the fractional Laplacian $(-\Delta)^s$ on $\mathbb{R}^d$ is defined via Fourier transform as
$$
\widehat{(-\Delta)^sf}(\xi)=|\xi|^{2s}\widehat{f}(\xi),\quad f\in C_0^\infty(\mathbb{R}^d),
$$
and it can be expressed by the pointwise formula
$$
(-\Delta)^sf(x)=c_{d,s}\mathrm{P.V.}\int_{\mathbb{R}^d}\frac{f(x)-f(y)}{|x-y|^{d+2s}}\d y.
$$
It is well known that the fractional Laplacian $(-\Delta)^s$ admits  the following limiting property  
\begin{align}\label{q1}
(-\Delta)^sf(x)\to f(x)
\end{align}
when $s$ converges to zero.
A challenging problem is
\begin{center}
\textit{whether the expansion \eqref{q1} could be extended to the first order or high order.}
\end{center}
A partial affirmative answer for this problem is given in \cite{CW2019} that, for any $C^2$-function $f$ on $\mathbb{R}^d$ with compact support,
$$
(-\Delta)^sf(x)=f(x)+sL_\Delta f(x)+o(s),\quad s\to0^+,
$$
where the operator $L_\Delta$ is given as a logarithmic Laplacian formally,
and is regarded as the right derivative of $(-\Delta)^s$ at $s=0$ %\footnote{Generally speaking, the derivative of $T_tf$}
$$
L_\Delta =\left.\frac{\d}{\d s}(-\Delta)^s\right|_{s=0^+}.
$$
%see \cite{CL2022-2,CL2022-1,KL1988,LXY2020} for the spatial derivative of the maximal function.
Some qualitative properties for $L_\Delta$ are obtained as follows.
\begin{flushleft}
{\bf{Theorem A.}} (\cite[Theorem 1.1]{CW2019})
{\it{
Let $f\in C^\alpha(\mathbb{R}^d)$ for some $\alpha>0$ with compact support. Then we have
$$
L_\Delta f(x)=\left.\frac{\d}{\d s}(-\Delta)^s\right|_{s=0^+} f(x)=c_d\int_{\mathbb{R}^d}\frac{f(x)\mathbbm{1}_{B_1(x)}(y)-f(y)}{|x-y|^{d}}\d y+\rho_df(x).
$$
Here $c_d=\pi^{-d/2}\Gamma(d/2)$ and $\rho_d=\log4+\psi(d/2)-\gamma$,
where $\psi=\Gamma^\prime/\Gamma$ is the Digamma function and $\gamma$ is the Euler-Mascheroni  constant.
Moreover,
\begin{enumerate}
  \item for $1<p\le \infty$, we have $L_\Delta f\in L^p(\mathbb{R}^d)$ and $[(-\Delta)^sf-f]/s\to L_\Delta f$ in $ L^p(\mathbb{R}^d)$ as $s\to0^+$;
  \item $\widehat{L_\Delta f}(\xi)=(2\log|\xi|)\widehat{f}(\xi)$ for a.e. $\xi\in \mathbb{R}^d$.
\end{enumerate}
}}
\end{flushleft}

The study of the fractional concept has been widely applied to different settings
such as the obstacle problem \cite{BFS2018,CSS2008,HT2024}, conformal geometry \cite{CG2011}, trace/extension problem \cite{CS2007,LLLMS,ST2010},
Fourier transform \cite{CFGW2021}, elliptic/parabolic equation \cite{CW2021,CM2023,DFX2018},
Nirenberg problem \cite{JLX2014}, Yamabe problem \cite{ACDFGW2019,JX2014}, differential operation \cite{CDH2016,LHZ2023,LSX2024}.

This note investigates a discrete version of Theorem A on $\zz^N$ ($N\ge1$).
When $N=1$, we prove that the right derivative of the fractional discrete Laplacian at zero is the discrete Riesz potential of order zero (see Section \ref{s2} below),
and when $N\ge2$, it is an exotic discrete Riesz potential with a corrector (see Section \ref{s3} below).
Here we remark that, when $N=1$, the fractional discrete Laplacian is determined by the Gamma function, and it has an explicit representation.
However, when $N\ge2$, the kernel of the fractional discrete Laplacian is not explicit,
and hence some methods and techniques in \cite{CW2019} are no longer applicable due to the complex structure of the underlying space $\zz^N$.
Finally, all results of this note as regards $(-\Delta_N)^s$ ($0<s<1$) can be extended to the reversed case $(-\Delta_N)^{s}$ ($-N/2<s<0$),
namely, in the discrete case $\zz^N$, the left derivative of the Riesz potential at zero is still the discrete Riesz potential of order zero;
see Section \ref{s4} below for more details.
%However, to keep the length of the note, we will not address it.

All constants with subscripts, such as $\rho_N$ (which depends on $N$ only), do not change in different occurrences.

\section{\bf The derivative of the fractional Laplacian on $\zz$}\label{s2}
\subsection{Preliminaries}
In this subsection, we consider a mesh of fixed size $h>0$ on $\mathbb{R}$ given by $\zz_h=\{hn: n\in\zz\}$.
For a function $f :\ \zz_h\to \mathbb{R} $, we use the notation $f_h(n) = f(hn)$ to denote the value of $f$ at the mesh point $hn\in \zz_h$.
The discrete Laplacian $\Delta_h$ on $\zz_h$ is then given by
$$
-\Delta_hf_h(n)=-\frac{1}{h^2}(f_h(n+1)+f_h(n-1)-2f_h(n)),
$$
and its fractional power $(-\Delta_h)^s$ $(0<s<1)$ can be defined by the heat semigroup method (\cite{CRSTV2018,ST2010})
\begin{align*}
(-\Delta_h)^sf_h=\frac{1}{\Gamma(-s)}\int_0^\infty(e^{t\Delta_h}f_h-f_h)\frac{\d t}{t^{s+1}},
\end{align*}
where $e^{t\Delta_h}$ is the heat semigroup associated with $-\Delta_h$, and $u_h(n,t)=e^{t\Delta_h}f_h(n)$ is the solution to the semidiscrete heat equation
$$
\begin{cases}
\partial_t u_h=\Delta_hu_h, \ &\textrm{in }\zz_h\times(0,\infty),\\
u_h(\cdot,0)=f_h, &\textrm{on }\zz_h.
\end{cases}
$$

For any $0\le s\le1$, the weighted Lebesgue space, denoted by $\ell_{ s}(\zz)$, is defined as the class of all functions $f:\zz_h\to\mathbb{R}$ such that their norms
$$
\|f_h\|_{\ell_{ s}(\zz)}=\sum_{n\in\zz}\frac{|f_h(n)|}{(1+|n|)^{1+ 2s}}<\infty.
$$
A result about the fractional Laplacian in \cite{CRSTV2018} tells us that, if $f_h\in \ell_{s}(\zz)$ with $0<s<1$, then
$$
(-\Delta_h)^sf_h(n)=\sum_{m\in\zz,m\neq n}\mathcal{K}^h_s(n-m)(f_h(n)-f_h(m)),
$$
where the discrete kernel $\mathcal{K}^h_s(m)$ is given by
$$
%(-\Delta_h)^sf_h(n)=\sum_{m\in\zz,m\neq n}\mathcal{K}^h_s(n-m)(f_h(n)-f_h(m))
\mathcal{K}^h_s(m)
=\pi^{-1/2}s\left(\frac{2}{h}\right)^{2s}\frac{\Gamma(1/2+s)}{\Gamma(1-s)}\frac{\Gamma(|m|-s)}{\Gamma(|m|+1+s)} \mathbbm{1}_{\zz\setminus\{0\}}(m)
=C_h(s)\frac{\Gamma(|m|-s)}{\Gamma(|m|+1+s)}\mathbbm{1}_{\zz\setminus\{0\}}(m).
%&c_h(s)=\frac{C_h(s)}{s}=\pi^{-1/2}\left(\frac{2}{h}\right)^{2s}\frac{\Gamma(1/2+s)}{\Gamma(1-s)},\quad c_h(0)=1\\
%&c_h^\prime(0)=\log4-\log h^2+\psi(1/2)+\psi(1)=-2\gamma-\log h^2,
$$
Denote by $c_h(s)$ the quotient $C_h(s)/s$  for simplicity.
Obviously, one has $c_h(0)=1$ and
\begin{align}\label{q22}
c_h^\prime(0)=\log4-\log h^2+\psi(1/2)+\psi(1)=-2\gamma-\log h^2.
\end{align}
If $h=1$, then we throw away the superscript of $\mathcal{K}^h_s(m)$ and subscript of $\zz_h$.

Moreover, the corresponding logarithmic Laplacian $\log(-\Delta_h)$ is defined by the right hand derivative of $(-\Delta_h)^s$ at $0$, namely,
$$
\log(-\Delta_h)=\left.\frac{\d }{\d s}(-\Delta_h)^s\right|_{s=0^+}.
$$

\subsection{\bf Estimates for Gamma functions}

This subsection collects some properties for Gamma functions.
Here we provide the proofs for the sake of completeness though their arguments are standard.

\begin{prop}\label{prop1}
Let $s>0$ be sufficiently small.
For any integer $k\ge 1$, the following formulas hold.
\begin{enumerate}
  \item $\displaystyle\frac{\Gamma(k-s)}{\Gamma(k+1+s)}\le \frac{1}{(k-s)^{1+2s}}$.
  \item $\displaystyle\sum_{|m|\ge k} \frac{\Gamma(|m|-s)}{\Gamma(|m|+1+s)}=\frac{\Gamma(k-s)}{s\Gamma(k+s)}$.
  \item $\displaystyle \frac{\Gamma(k-s)}{\Gamma(k+s)}=1-2s\psi(k)+o(s)$.
  %\item $\displaystyle\frac{\Gamma(k+s)}{\Gamma(k+1-s)}\le \frac{k+1-s}{(k+s)^{2-2s}}$.
%  \item $\displaystyle\sum_{|m|<k} \frac{\Gamma(|m|+s)}{\Gamma(|m|+1-s)}=\frac{\Gamma(k+s)}{s\Gamma(k-s)}$.
%  \item $\displaystyle \frac{\Gamma(k+s)}{\Gamma(k-s)}=1+2s\psi(k)+o(s)$.
\end{enumerate}
\end{prop}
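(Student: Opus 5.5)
The three items are independent, so I would prove them separately, all with elementary Gamma function tools: log-convexity of $\Gamma$ for (1), a telescoping identity for (2), and a first-order Taylor expansion for (3). Throughout, $0<s<1/2$, so every argument of $\Gamma$ that appears is positive.

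For (1), put $x=k-s>0$. Then $k+1+s=x+1+2s$, and the claim becomes
$$\frac{\Gamma(x+1+2s)}{\Gamma(x)}\ge x^{1+2s}.$$
Using $\Gamma(x+1)=x\Gamma(x)$, this is equivalent to $\Gamma(x+1+2s)/\Gamma(x+1)\ge x^{2s}$.
\begin{itemize}
\item Since $\log\Gamma$ is convex, the tangent-line inequality gives $\log\Gamma(x+1+2s)-\log\Gamma(x+1)\ge 2s\,\psi(x+1)$.
\item It then suffices to know $\psi(x+1)\ge\log x$ for $x>0$. This follows from the standard bound $\psi(y)>\log y-1/y$ at $y=x+1$, together with $\log(x+1)-\frac{1}{x+1}\ge\log x$, which holds because $\log(1+1/x)\ge \frac{1}{x+1}$.
\end{itemize}
This step is the only one where a choice of auxiliary inequality matters. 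If the digamma bound were awkward, I would use Wendel's inequality instead.

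For (2), write $a_m=\Gamma(m-s)/\Gamma(m+1+s)$ for $m\ge1$. Since the summand depends only on $|m|$, the left side equals $2\sum_{m\ge k}a_m$. Set $b_m=\Gamma(m-s)/(s\,\Gamma(m+s))$. Using $\Gamma(m+1-s)=(m-s)\Gamma(m-s)$ and $\Gamma(m+1+s)=(m+s)\Gamma(m+s)$,
$$b_m-b_{m+1}=\frac{\Gamma(m-s)}{s\,\Gamma(m+1+s)}\bigl[(m+s)-(m-s)\bigr]=2a_m .$$
Summing from $m=k$ to $M$ telescopes to $b_k-b_{M+1}$. By the standard asymptotic $\Gamma(M-s)/\Gamma(M+s)\sim M^{-2s}$, we have $b_{M+1}\to0$ as $M\to\infty$ for fixed $s>0$. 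This gives exactly $\Gamma(k-s)/(s\,\Gamma(k+s))$. The series converges by (1), or directly from the telescoping, since all terms are positive.

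For (3), let $g(s)=\log\Gamma(k-s)-\log\Gamma(k+s)$, which is smooth near $s=0$ because $k\ge1$.
\begin{itemize}
\item $g(0)=0$ and $g'(0)=-\psi(k)-\psi(k)=-2\psi(k)$, so $g(s)=-2s\psi(k)+o(s)$.
\item Exponentiating with $e^{t}=1+t+o(t)$ gives $\Gamma(k-s)/\Gamma(k+s)=1-2s\psi(k)+o(s)$.
\end{itemize}
The $o(s)$ depends on $k$. This is consistent with the statement, which is for fixed $k$.

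The main obstacle is the sharp inequality in (1): it needs the exponent to be exactly $1+2s$, which is why I would reduce it to a lower bound on $\psi$. Items (2) and (3) are routine.
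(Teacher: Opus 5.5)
Your proof is correct. Item (3) is essentially the paper's argument: the paper invokes L'Hospital's rule, which amounts to your computation $g'(0)=-2\psi(k)$. For (1) and (2) you take genuinely different routes.

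For (1), the paper writes the ratio as a Beta integral in exponential variables,
\[
\frac{\Gamma(k-s)}{\Gamma(k+1+s)}=\frac{1}{\Gamma(1+2s)}\int_0^\infty e^{-(k-s)t}(1-e^{-t})^{2s}\,\mathrm{d}t .
\]
It then uses $1-e^{-t}\le t$ and evaluates the remaining Gamma integral, which gives exactly $(k-s)^{-1-2s}$. That argument needs only Euler's integrals. Your argument, via log-convexity of $\Gamma$ and $\psi(y)>\log y-1/y$, is a clean Gautschi/Wendel-type argument. However, it imports the digamma bound, which is itself usually proved from an integral representation of $\psi$, so it is not really more elementary.

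For (2), the paper writes each term as $B(m-s,1+2s)/\Gamma(1+2s)$ and sums the geometric series $\sum_{m\ge k}t^{m-s-1}$ under the integral sign. It then recognizes $B(k-s,2s)$, so the closed form appears with no tail analysis. Your telescoping identity $b_m-b_{m+1}=2a_m$ uses only $\Gamma(z+1)=z\Gamma(z)$, and it shows why the right-hand side has the form it does: it is a discrete antiderivative. The price is that you must show $b_{M+1}\to0$.

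You do not need to import the asymptotic $\Gamma(M-s)/\Gamma(M+s)\sim M^{-2s}$ for that step. Your own item (1) already gives
\[
\frac{\Gamma(M-s)}{\Gamma(M+s)}=(M+s)\,\frac{\Gamma(M-s)}{\Gamma(M+1+s)}\le\frac{M+s}{(M-s)^{1+2s}}\longrightarrow 0 \quad (M\to\infty),
\]
which makes your proof of (2) fully self-contained.
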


\begin{proof}
(1) It follows from the identity for the quotient of the Gamma function (see \cite[(5.1)]{CRSTV2018}) that
$$
\frac{\Gamma(k-s)}{\Gamma(k+1+s)}
=\frac{1}{\Gamma(1+2s)}\int_0^\infty e^{-(k-s)t}(1-e^{-t})^{2s}\d t\le \frac{1}{\Gamma(1+2s)}\int_0^\infty e^{-(k-s)t}t^{2s}\d t= \frac{1}{(k-s)^{1+2s}},
$$
where we used the basic inequality $1-e^{-t}\le t$ for any $t\ge 0$.

(2)  One may invoke the relationship between the Gamma function $\Gamma(\cdot)$ and the Beta function $B(\cdot,\cdot)$ to obtain
\begin{align*}
\sum_{|m|\ge k}  \frac{\Gamma(|m|-s)}{\Gamma(|m|+1+s)}
%&=\frac{2}{\Gamma(1+2s)}\sum_{m=k}^\infty \frac{\Gamma(m-s)\Gamma(1+2s)}{\Gamma(m+1+s)}\\
&=\frac{2}{\Gamma(1+2s)}\sum_{m=k}^\infty B(m-s,1+2s)\\
&=\frac{1}{s\Gamma(2s)} \int_0^1 \sum_{m=k}^\infty t^{m-s-1}(1-t)^{2s}\d t
%&=\frac{1}{\Gamma(1+2s)} \int_0^\infty  t^{k-s-1}(1-t)^{2s-1}\d t\\
%&=\frac{1}{\Gamma(1+2s)} B(k-s,2s) \\
=\frac{\Gamma(k-s)}{s\Gamma(k+s)}.
\end{align*}

(3) The behaviour of the quotient is a consequence of L'Hospital's rule.
%
%
%(4) Note that
%\begin{align*}
%\frac{\Gamma(k+s)}{\Gamma(k+1-s)}=(k+1-s)\frac{\Gamma(k+s)}{\Gamma(k+2-s)}.
%\end{align*}
%The remainder of the argument is analogous to that in (1) and is left to the reader.
%
%(5) This summation can be deduced easily by using induction.
%
%(6) The proof is similar to that in (3). We leave the detail to the read.
This completes the proof.
\end{proof}

\subsection{\bf The main result and its proof}
In this subsection, we will prove the first main result of this note.
\begin{thm}\label{thm1}
Let $f$ be a real-value function on $\zz_h$ ($h>0$) with bounded support.
Then we have
\begin{align}\label{q2}
\log(-\Delta_h)f_h(n)=\left.\frac{\d }{\d s}(-\Delta_h)^s\right|_{s=0^+}f_h(n)=-\sum_{m\in\zz, m\neq n}\frac{f_h(m)}{|n-m|} -(\log h^2)f_h(n).
\end{align}
Moreover, we have $\log(-\Delta_h)f_h\in \ell^\infty(\mathbb{Z})$ and
$$\lim_{s\to0^+}\frac{(-\Delta_h)^sf_h-f_h}{s}=\log(-\Delta_h)f_h$$
in $L^\infty$-sense.\footnote{In fact, the $L^\infty$-norm can be replaced by the $L^p$-norm with $1<p\le \infty$, and the proof is left to the readers.}
\end{thm}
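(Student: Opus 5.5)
The plan is to write the difference quotient explicitly and use Proposition \ref{prop1} to evaluate its limit term by term. Since $f_h$ has bounded support, the pointwise formula for $(-\Delta_h)^s$ applies. Writing $C_h(s)=s\,c_h(s)$, I would split
\begin{align*}
(-\Delta_h)^sf_h(n)=C_h(s)f_h(n)\sum_{m\neq 0}\frac{\Gamma(|m|-s)}{\Gamma(|m|+1+s)}-C_h(s)\sum_{m\neq n}\frac{\Gamma(|n-m|-s)}{\Gamma(|n-m|+1+s)}f_h(m).
\end{align*}
By Proposition \ref{prop1}(2) with $k=1$, the first sum equals $\Gamma(1-s)/(s\Gamma(1+s))$. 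The first term is therefore $c_h(s)\frac{\Gamma(1-s)}{\Gamma(1+s)}f_h(n)$. Proposition \ref{prop1}(3) with $\psi(1)=-\gamma$, together with \eqref{q22}, expands this as
\begin{align*}
\bigl(1+s(-2\gamma-\log h^2)+o(s)\bigr)\bigl(1+2s\gamma+o(s)\bigr)f_h(n)=f_h(n)-s(\log h^2)f_h(n)+o(s)f_h(n).
\end{align*}
Subtracting $f_h(n)$ and dividing by $s$ gives $-(\log h^2)f_h(n)+o(1)\,f_h(n)$. The $o(1)$ factor here depends only on $s$, not on $n$, so the convergence is uniform because $f_h$ is bounded.

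The second term divided by $s$ is $-c_h(s)\sum_{m\neq n}\frac{\Gamma(|n-m|-s)}{\Gamma(|n-m|+1+s)}f_h(m)$. Pointwise, $\Gamma(k-s)/\Gamma(k+1+s)\to 1/k$ and $c_h(s)\to1$, which gives the limit $-\sum_{m\ne n}f_h(m)/|n-m|$. This proves \eqref{q2} pointwise.

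The main work is uniformity in $n$. Let $\operatorname{supp} f_h\subset\{|m|\le M\}$. Then the sum in the second term has at most $2M+1$ nonzero terms, and I would bound
\begin{align*}
\sup_n\Bigl|\sum_{m\ne n}\Bigl(\frac{\Gamma(|n-m|-s)}{\Gamma(|n-m|+1+s)}-\frac{1}{|n-m|}\Bigr)f_h(m)\Bigr|\le (2M+1)\|f_h\|_\infty\sup_{k\ge1}\Bigl|\frac{\Gamma(k-s)}{\Gamma(k+1+s)}-\frac1k\Bigr|.
\end{align*}
So it suffices to show $\sup_{k\ge1}|\Gamma(k-s)/\Gamma(k+1+s)-1/k|\to0$ as $s\to0^+$. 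I would prove this from the integral representation used in Proposition \ref{prop1}(1):
$$\frac{\Gamma(k-s)}{\Gamma(k+1+s)}=\frac{1}{\Gamma(1+2s)}\int_0^\infty e^{-(k-s)t}(1-e^{-t})^{2s}\,\d t,$$
whereas $1/k=\int_0^\infty e^{-kt}\,\d t$. The integrands converge and are dominated by $e^{-t/2}(1+t)$ uniformly in $k\ge1$ for $s<1/2$, so dominated convergence gives the uniform bound. Alternatively one can use Proposition \ref{prop1}(1) for an upper bound together with the monotonicity of the ratio in $s$. The factor $c_h(s)\to1$ is harmless.

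Boundedness of $\log(-\Delta_h)f_h$ follows at once, since $\sum_{m\ne n}|f_h(m)|/|n-m|\le(2M+1)\|f_h\|_\infty$. Combining the two parts yields $\|[(-\Delta_h)^sf_h-f_h]/s-\log(-\Delta_h)f_h\|_{\ell^\infty}\to0$. I expect the only delicate point to be the uniform-in-$k$ convergence of the Gamma quotient. The remaining steps are direct expansions using \eqref{q22} and Proposition \ref{prop1}(2)--(3).
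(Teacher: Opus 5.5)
Your argument is correct, and it takes a genuinely different, shorter route than the paper.

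\textbf{The paper's route.} It cuts the kernel at $|m-n|=4\ell$ and writes $(-\Delta_h)^sf_h=A^h_\ell(s,\cdot)+D^h_\ell(s)f_h$, where $D^h_\ell(s)=c_h(s)\Gamma(4\ell-s)/\Gamma(4\ell+s)$ comes from Proposition \ref{prop1}(2) at $k=4\ell$. Differentiating gives $-2(\psi(4\ell)+\gamma)-\log h^2$. Via $\psi(4\ell)=\sum_{j=1}^{4\ell-1}j^{-1}-\gamma$ this becomes a truncated harmonic sum, which is recombined with the near part $a^h_\ell$ to form the convolution. The cost is three error terms of size $\|f_h\|_{\ell^1(\zz)}\ell^{-1}$: the far parts of $A^h_\ell/s$ and of $a^h_\ell$, and the tail $g$. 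These are removed only by letting $\ell\to\infty$ at the end.

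\textbf{Your route.} You apply Proposition \ref{prop1}(2) at $k=1$ to the whole kernel. Then $\psi(1)=-\gamma$ cancels the $-2\gamma$ in $c_h'(0)$, and the diagonal coefficient $-\log h^2$ appears at once. The off-diagonal part needs only $\sup_{k\ge1}|\Gamma(k-s)/\Gamma(k+1+s)-1/k|\to0$.

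\textbf{What each buys.} The paper's scheme is the discrete analogue of Chen--Weth, where a cutoff is unavoidable because $|x-y|^{-d-2s}$ is not integrable at the diagonal. On $\zz$ the kernel $\mathcal{K}^h_s$ is summable for each $s>0$, with explicit total mass $c_h(s)\Gamma(1-s)/\Gamma(1+s)$. Your route exploits exactly this and avoids both the harmonic-number identity and the double limit. If you replace $(2M+1)\|f_h\|_\infty$ by $\|f_h\|_{\ell^1(\zz)}$, the same argument covers every $f_h\in\ell^1(\zz)$.

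\textbf{One point to tighten.} In your delicate step, a dominating function independent of $k$ only gives convergence for each fixed $k$, not uniformity in $k$. Uniformity comes from writing the integrand difference as $e^{-kt}\bigl(e^{st}(1-e^{-t})^{2s}-1\bigr)$. Its modulus is at most $e^{-t}\bigl|e^{st}(1-e^{-t})^{2s}-1\bigr|$ for every $k\ge1$. This $k$-free function tends to $0$ pointwise and is bounded by $2e^{-t/2}$ for $s<1/2$. The remaining factor $1/\Gamma(1+2s)-1$ multiplies an integral bounded by $1/(1-s)$.

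Alternatively, Proposition \ref{prop1}(1) gives $\sup_{k\ge K}|\Gamma(k-s)/\Gamma(k+1+s)-1/k|\le 3/K$ for $K\ge2$ and $s\le1/2$, and pointwise convergence handles the finitely many $k<K$. This is essentially what the paper's split into $|n|<2\ell$ and $|n|\ge 2\ell$ accomplishes.
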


\begin{rem}
In Theorem \ref{thm1}, we adopt the notation ``log'' to denote the derivative of the fractional discrete Laplacian at zero, namely,
$$
\log(-\Delta_h)=\left.\frac{\d }{\d s}(-\Delta_h)^s\right|_{s=0^+}.
$$
There are two reasons for this notation.
The first one is that the logarithmic function appears naturally in the differential operation (at least formally)
$$
\left.\frac{\d }{\d s}(-\Delta_h)^s\right|_{s=0^+}=\left.(-\Delta_h)^s\log(-\Delta_h) \right|_{s=0^+}=\log(-\Delta_h).
$$
The other is that, in the Euclidean case, the (weakly) singular integral operator $L_\Delta$ has a Fourier symbol of the form $\xi \mapsto 2\log|\xi|$; see Theorem A.
Moreover, in the discrete case, the operator $\log(-\Delta_h)$ is nonsingular (since $K^h_s(0)=0$)
and admits a Fourier symbol of the form $\xi \mapsto 2\log |(2\sin\pi\xi)/h|$ at least formally.
Indeed, for any $f_h\in\ell^1(\zz)$, its Fourier transform is given by
$$
\widehat{f_h}(\xi)=\sum_{n\in\zz}f_h(n)e^{2\pi in\xi}, \quad -1/2\le \xi\le 1/2;
$$
see \cite[Section 3]{Ci2023} for more details.
A routine computation gives rise to
\begin{align*}
\widehat{(-\Delta_hf_h)}(\xi)
&=\sum_{n\in\zz}\frac{-1}{h^2}(f_h(n+1)+f_h(n-1)-2f_h(n))e^{2\pi in\xi}\\
&=\sum_{n\in\zz}f_h(n)e^{2\pi in\xi}\frac{-1}{h^2}(e^{2\pi i\xi}+e^{-2\pi i\xi}-2)=\left(\frac{2}{h}\sin\pi\xi\right)^2\widehat{f_h}(\xi).
\end{align*}
From this it is only one step to replace the exponent 2 by a general exponent $2s$,
and thus to define (at least formally) the fractional power of the discrete Laplacian by
$$
\widehat{(-\Delta_h)^sf_h}(\xi)=\left(\frac{2}{h}\sin\pi\xi\right)^{2s}\widehat{f_h}(\xi).
$$
The sine function of the Fourier symbol above does not appear in the Euclidean case.
We believe that this phenomenon is caused by the Fourier transform of the heat kernel
$$
\widehat{G_{t,1}}(\xi)=e^{-4t\sin^2\pi \xi}.
$$
However, we still employ the notation $\log(-\Delta_h)$ for simplicity, rather than $\log\sin(-\Delta_h)$.
\end{rem}

\begin{rem}
It is convenient to introduce the discrete kernel function
\begin{align*}
K:\ &\zz\to \mathbb{R}\\
&m\mapsto|m|^{-1}\mathbbm{1}_{\zz\setminus\{0\}}(m).
\end{align*}
Then the summation representation \eqref{q2} can be rewritten as
\begin{align*}
\log(-\Delta_h)f_h(n)
&=-\sum_{m\in\zz, m\neq n}\frac{f_h(m)}{|n-m|} -(\log h^2)f_h(n)\\
&=-K\ast f_h(n)-(\log h^2)f_h(n)=-\mathcal{I}_0f_h(n)-(\log h^2)f_h(n) ,
\end{align*}
where $\mathcal{I}_0$ denotes  an exotic discrete Riesz potential (namely, the discrete Riesz potential of order zero)
with the kernel $K(n)=|n|^{-1}$; see \cite{SW2000} for example.
Moreover, we remark that $h=1$ is a special and important case in \eqref{q2}.
In this case, the derivative of the fractional discrete Laplacian reduces to the exotic discrete Riesz potential authentically.
\end{rem}

\begin{proof}[Proof of Theorem \ref{thm1}]
\textbf{Step 1}: decompose $(-\Delta_h)^sf_h$.
Pick an integer $\ell\ge 1$ sufficiently large such that $\supp f_h\subset (-\ell,\ell)$.
For any $s>0$ sufficiently small, one writes
\begin{align*}
(-\Delta_h)^sf_h(n)
%&=\sum_{m\in\zz,m\neq n}\mathcal{K}^h_s(n-m)(f_h(n)-f_h(m)) \\
&=\sum_{0<|m-n|<4\ell}\mathcal{K}^h_s(n-m)(f_h(n)-f_h(m))+\sum_{|m-n|\ge4\ell}\mathcal{K}^h_s(n-m)(f_h(n)-f_h(m)) \\
&=\left(\sum_{0<|m-n|<4\ell}\mathcal{K}^h_s(n-m)(f_h(n)-f_h(m))-\sum_{|m-n|\ge4\ell}\mathcal{K}^h_s(n-m)f_h(m)\right)+\sum_{|m-n|\ge4\ell}\mathcal{K}^h_s(n-m)f_h(n) \\
&=A^h_\ell(s,n)+D^h_\ell(s)f_h(n).
\end{align*}

\textbf{Step 2}: asymptotic behavior of $A^h_\ell(s,n)/s$ near the origin.
When $|n|<2\ell$, it follows from the support of $f_h$ that
$$
A^h_\ell(s,n)=\sum_{0<|m-n|<4\ell}\mathcal{K}^h_s(n-m)(f_h(n)-f_h(m)).
$$
Note that
$$
\frac{\mathcal{K}^h_s(n-m)}{s}=c_h(s)\frac{\Gamma(|n-m|-s)}{\Gamma(|n-m|+1+s)}\to \frac{1}{|n-m|},\quad s\to 0^+,
$$
and hence
$$
\frac{A^h_\ell(s,n)}{s}\to \sum_{0<|m-n|<4\ell}\frac{f_h(n)-f_h(m)}{|n-m|}=a^h_\ell(n),\quad s\to 0^+.
$$

\textbf{Step 3}: estimate $A^h_\ell(s,n)/s$ away from the origin.
When $|n|\ge 2\ell$, we note that $f_h(n)=0$ (which is guaranteed by the support of $f_h$) implies
$$
A^h_\ell(s,n)=-\sum_{m\in\zz,m\neq n}\mathcal{K}^h_s(n-m)f_h(m)
$$
and
$$
|n-m|\ge |n|-|m|>|n|/2\ge \ell
$$
whenever $|m|<\ell$.
From this, Proposition \ref{prop1} (1), it holds
\begin{align*}
\left|\frac{A^h_\ell(s,n)}{s}\right|
&\le \frac{C_h(s)}{s}\sum_{m\in\zz,m\neq n}\frac{\Gamma(|n-m|-s)}{\Gamma(|n-m|+1+s)}|f_h(m)|\\
&\le c_h(s)\sum_{m\in\zz,m\neq n}\frac{1}{(|n-m|-s)^{1+2s}}|f_h(m)|
%&\le \max_{0<s<1/2}c_h(s)\sum_{m\in\zz,m\neq n}|f_h(m)| \frac{1}{(N-s)^{1+2s}}\\
\le c_h(s)\|f_h\|_{\ell^1(\zz)}(\ell-s)^{-1}.
\end{align*}

\textbf{Step 4}: estimate $a^h_\ell(n)$ away from the origin.
When $|n|\ge 2\ell$, proceeding as the argument of Step 3, we have $f_h(n)=0$ (which is guaranteed by the support of $f_h$) and hence
$$
|a^h_\ell(n)|\le \sum_{0<|m-n|<4\ell}\frac{|f_h(m)|}{|n-m|}\le \|f_h\|_{\ell^1(\zz)}\ell^{-1}.
$$

\textbf{Step 5}: asymptotic behavior of  $(D^h_\ell(s)-1)/s$  near the origin. One may invoke Proposition \ref{prop1} (2) to deduce
$$
D^h_\ell(s)
%&=\sum_{|m-n|\ge4\ell}\mathcal{K}^h_s(n-m)\\
= C_h(s)\sum_{|m|\ge4\ell} \frac{\Gamma(|m|-s)}{\Gamma(|m|+1+s)}
=\frac{C_h(s)}{s} \frac{\Gamma(4\ell-s)}{\Gamma(4\ell+s)} =c_h(s) \frac{\Gamma(4\ell-s)}{\Gamma(4\ell+s)}
$$
and hence by \eqref{q22} and Proposition \ref{prop1} (3)
$$
\lim_{s\to0^+}\frac{D^h_\ell(s)-1}{s}=-2(\psi(4\ell)+\gamma)-\log h^2=-\sum_{0<|m-n|<4\ell}\frac{1}{|n-m|}-\log h^2=d_\ell^h,
$$
where we used the property of $\psi$ as follows
$$\psi(4\ell)=\sum_{m=1}^{4\ell-1}\frac1m-\gamma=\frac12\sum_{0<|m-n|<4\ell}\frac{1}{|n-m|}-\gamma.$$

\textbf{Step 6}: add $d_\ell^h f_h(n)$ to $a^h_\ell(n)$.
Taking $a^h_\ell(n)$ and $d_\ell^h f_h(n)$ into consideration, we arrive at
\begin{align*}
a^h_\ell(n)+d_\ell^h f_h(n)
&=\sum_{0<|m-n|<4\ell}\frac{f_h(n)-f_h(m)}{|n-m|}-\sum_{0<|m-n|<4\ell}\frac{f_h(n)}{|n-m|}-(\log h^2)f_h(n)\\
%&=-\sum_{0<|m-n|<4\ell}\frac{f_h(m)}{|n-m|}-(\log h^2)f_h(n)\\
&=-\sum_{m\in\zz, m\neq n}\frac{f_h(m)}{|n-m|} -(\log h^2)f_h(n)+\sum_{|m-n|\ge4\ell}\frac{f_h(m)}{|n-m|}  =\log(-\Delta_h)f_h(n) +g(n).
\end{align*}

\textbf{Step 7}: estimate $g$. Obviously, one has
$$
|g(n)|\le \|f_h\|_{\ell^1(\zz)}\ell^{-1}.
$$

\textbf{Step 8}: convergence in $L^\infty$-sense.\footnote{This $L^\infty$-convergence implies the pointwise convergence. }
By Steps 1 and 6, we decompose the desired result as
\begin{align*}
\left\|\frac{(-\Delta_h)^sf_h-f_h}{s}-\log(-\Delta_h)f_h\right\|_{\ell^\infty(\zz)}
%&=\left\|\frac{A^h_\ell(s,\cdot)+D^h_\ell(s)f_h-f_h}{s}-\log(-\Delta_h)f_h\right\|_{\ell^\infty(\zz)} \\
%&=\left\|\frac{A^h_\ell(s,\cdot)}{s}+\frac{D^h_\ell(s)f_h-f_h}{s}-\log(-\Delta_h)f_h\right\|_{\ell^\infty(\zz)} \\
%&=\left\|\frac{A^h_\ell(s,\cdot)}{s}-a^h_\ell+a^h_\ell+\frac{D^h_\ell(s)f_h-f_h}{s}-\log(-\Delta_h)f_h\right\|_{\ell^\infty(\zz)}\\
%&=\left\|\frac{A^h_\ell(s,\cdot)}{s}-a^h_\ell+a^h_\ell+\frac{D^h_\ell(s)f_h-f_h}{s}-d_\ell^hf_h+d_\ell^hf_h-\log(-\Delta_h)f_h\right\|_{\ell^\infty(\zz)}\\
&=\left\|\frac{A^h_\ell(s,\cdot)}{s}-a^h_\ell+\frac{D^h_\ell(s)f_h-f_h}{s}-d_\ell^hf_h+g\right\|_{\ell^\infty(\zz)} \\
%&\le \left\|\frac{A^h_\ell(s,\cdot)}{s}-a^h_\ell\right\|_{\ell^\infty(\zz)}+\left|\frac{D^h_\ell(s)-1}{s}-d_\ell^h\right| \|f_h\|_{\ell^\infty(\zz)}
% +\|g\|_{\ell^\infty(\zz)} \\
&\le \left\|\frac{A^h_\ell(s,\cdot)}{s}-a^h_\ell\right\|_{\ell^\infty(\zz\bigcap(-2\ell,2\ell))}+\left\|\frac{A^h_\ell(s,\cdot)}{s}-a^h_\ell\right\|_{\ell^\infty(\zz\setminus(-2\ell,2\ell))} \\
&\ +\left|\frac{D^h_\ell(s)-1}{s}-d_\ell^h\right| \|f_h\|_{\ell^\infty(\zz)} +\|g\|_{\ell^\infty(\zz)}.
\end{align*}
Therefore, invoke all estimates in Steps 2-5 and 7 to deduce
$$
\lim_{s\to0^+}\left\|\frac{(-\Delta_h)^sf_h-f_h}{s}-\log(-\Delta_h)f_h\right\|_{\ell^\infty(\zz)}
\le 3\|f_h\|_{\ell^1(\zz)}\ell^{-1}.
$$
As the integer $\ell\ge | \supp f|/h$ is arbitrary, we derive that the desired conclusion by letting $\ell\to+\infty$.

\textbf{Step 9}: $\log(-\Delta_h)f_h$ is in $\ell^\infty(\mathbb{Z})$.
Note that
$$\sum_{m\in\zz, m\neq n}\frac{|f_h(m)|}{|n-m|}\le \sum_{m\in\zz, m\neq n}|f_h(m)| = \|f_h\|_{\ell^1(\mathbb{Z})}$$
and hence
$$\|\log(-\Delta_h)f_h\|_{\ell^\infty(\mathbb{Z})}\le  \|f_h\|_{\ell^1(\mathbb{Z})}+|\log h^2|\,\|f_h\|_{\ell^\infty(\mathbb{Z})}<\infty.$$

\textbf{Step 10}: completion of the proof.
Finally, by Steps 8 and 9, we know that the pointwise formula \eqref{q2} holds,
$\log(-\Delta_h)f_h$ is in $\ell^\infty(\mathbb{Z})$,
and $[(-\Delta_h)^sf_h-f_h]/s$ approaches $\log(-\Delta_h)f_h$ in $L^\infty$-sense and also in pointwise sense as $s$ goes to $0^+$.
This completes the proof.
\end{proof}

\section{\bf The derivative of the fractional Laplacian on $\zz^N$}\label{s3}

\subsection{Preliminaries}
Our target in this subsection is to introduce the multidimensional discrete Laplacian
$$
\Delta_Nf(\n)=\sum_{k=1}^N\Delta_{N,k}f(\n)=\sum_{k=1}^N [f(\n+\mathbf{e}_k)+f(\n-\mathbf{e}_k)-2f(\n)],
$$
where $\{\mathbf{e}_k\}_{k=1}^N$ is an orthonormal basis in $\zz^N$ ($N\ge2$).
The heat semigroup $W_t=e^{t\Delta_N}$ is the solution mapping of the $N$-dimensional semidiscrete heat equation
$$
\begin{cases}
\partial_t u=\Delta_Nu, \ &\textrm{in }\zz^N\times(0,\infty),\\
u(\cdot,0)=f, &\textrm{on }\zz^N.
\end{cases}
$$
The solution to the above equation is given by (see Section \ref{s2} for the one-dimensional case)
$$
u(\n,t)=W_tf(\n)=\sum_{\m\in \zz^N}G_{t,N}(\n-\m)f(\m),
$$
where the heat kernel $G_{t,N}(\m)$ is constructed by
$$
G_{t,N}(\m)=\prod_{k=1}^N(e^{-2t}I_{m_k}(2t)),\quad \m=(m_1,\cdots,m_N),
$$
through the modified Bessel function of first kind $I_a$.
By (14) and (30) in \cite{Ci2023},
we know that the heat kernel $G_{t,N}(\m)$ admits the polynomial growth as
\begin{align}\label{p3}
0\le G_{t,N}(\m)\le C\left(\frac{\sqrt{t}}{\sqrt{t}+|\m|}\right)^2\frac{1}{(\sqrt{t}+|\m|)^N}.
\end{align}

%For any $0\le s\le1$, the weighted Lebesgue space, denoted by $\ell_{\pm s}(\zz)$, is defined as the class of all functions $f:\zz_h\to\mathbb{R}$ such that their norms
%\begin{align*}
%\|f\|_{\ell_{\pm s}(\zz)}=\sum_{n\in\zz}\frac{f_h(n)}{(1+|n|)^{1\pm 2s}}<\infty.
%\end{align*}
If $f\in \ell_{s}(\zz^N)$ with $0<s<1$, %(see \cite[(59)]{Ci2023} for the $N$-dimensional case and Section \ref{s2} for the one-dimensional case),
then
$$
(-\Delta_N)^sf(\mathbf{n})=\sum_{\m\in\zz^N,\m\neq \n}\mathcal{K}_s(\mathbf{n}-\mathbf{m})(f(\mathbf{n})-f(\mathbf{m})),
$$
where the discrete kernel $\mathcal{K}_s(\m)$ is given by
\begin{align}\label{q6}
\mathcal{K}_s(\m)=\frac{1}{|\Gamma(-s)|}\int_0^\infty G_{t,N}(\m)\frac{\d t}{t^{s+1}}\mathbbm{1}_{\zz^N\setminus\{\mathbf{0}\}} (\m) ;
\end{align}
see \cite[Section 5]{Ci2023} for more details.
Denote by ${K}(\m)$ the value of the quotient $\mathcal{K}_s(\m)/s$ at zero
\begin{align}\label{q3}
{K}(\m)
=\left.\frac{\mathcal{K}_s(\mathbf{m})}{s}\right|_{s=0}
=\left.\frac{1}{\Gamma(1-s)}\int_0^\infty G_{t,N}(\m)\frac{\d t}{t^{s+1}}\mathbbm{1}_{\zz^n\setminus\{\mathbf{0}\}} (\m)\right|_{s=0}
=\int_0^\infty G_{t,N}(\m)\frac{\d t}{t}\mathbbm{1}_{\zz^n\setminus\{\mathbf{0}\}} (\m)
\end{align}
for simplicity.
We refer the readers to \cite{CGRTV2017,MSW2002} and references therein for more discrete works
such as the Hardy-Littlewood function \cite{BMSW2019,LW2017}, Hardy space \cite{BD2014}, Calder\'{o}n reproducing formula \cite{HLY2001}, Radon transform \cite{ISMW2007}.

\subsection{\bf Estimates for discrete kernels}

This subsection establishes some upper bounds for ${\mathcal{K}_s(\m)}$ and ${K}(\m)$.

%\begin{align*}
%\mathcal{K}_s(\m)=\frac{1}{|\Gamma(-s)|}\int_0^\infty G_{t,N}(\n)\frac{\d t}{t^{s+1}}\mathbbm{1}_{\zz^n\setminus\{\mathbf{0}\}} (\n)\\
%{K}(\m)=\int_0^\infty G_{t,N}(\n)\frac{\d t}{t}\mathbbm{1}_{\zz^n\setminus\{\mathbf{0}\}} (\n)
%\end{align*}

\begin{prop}\label{prop2}
%The following formulas hold.
\begin{enumerate}
  \item  There exists a constant $C_N>0$ such that
  $$\displaystyle 0\le\mathcal{K}_s(\m)\le \frac{C_N}{|\Gamma(-s)|}\left(\frac{1}{1-s}+\frac{2}{N+2s}\right)
   \frac{\mathbbm{1}_{\zz^n\setminus\{\mathbf{0}\}} (\m)}{|\m|^{N+2s}}. $$ %,\quad \m\meq\mathbf{0}.$$
  \item  $\displaystyle \mathcal{K}_s(\m)=K(\m)s+o(s),\quad s\to0^+. $
  \item  There exists a constant $C_N>0$ same as in (1) such that
  $$0\le \displaystyle {K}(\m)\le C_N\left(1+\frac{2}{N}\right)\frac{\mathbbm{1}_{\zz^n\setminus\{\mathbf{0}\}} (\m)}{|\m|^{N}}. $$
\end{enumerate}
\end{prop}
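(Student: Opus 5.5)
Throughout, fix $\m\neq\mathbf{0}$ (for $\m=\mathbf{0}$ all three quantities vanish because of the indicator) and write $r=|\m|\ge1$. The plan is to insert the heat kernel bound \eqref{p3} into the integral \eqref{q6} and split the $t$-integral at $t=r^2$.

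For (1), nonnegativity is immediate since $G_{t,N}(\m)\ge0$. On $(0,r^2)$ we use $\sqrt t+r\ge r$ in \eqref{p3}, which gives $G_{t,N}(\m)\le C t\, r^{-N-2}$. Hence
$$\int_0^{r^2} G_{t,N}(\m)\,t^{-s-1}\,\d t\le C r^{-N-2}\int_0^{r^2}t^{-s}\,\d t=\frac{C}{1-s}\,r^{-N-2s}.$$
On $(r^2,\infty)$ we use $\sqrt t+r\ge\sqrt t$ and $\sqrt t/(\sqrt t+r)\le1$, which gives $G_{t,N}(\m)\le C t^{-N/2}$. Hence
$$\int_{r^2}^\infty G_{t,N}(\m)\,t^{-s-1}\,\d t\le C\int_{r^2}^\infty t^{-N/2-s-1}\,\d t=\frac{2C}{N+2s}\,r^{-N-2s}.$$
Adding the two pieces and dividing by $|\Gamma(-s)|$ gives (1) with $C_N=C$.

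Statement (3) is the same computation at $s=0$ applied to \eqref{q3}. It yields $K(\m)\le C(1+2/N)r^{-N}$, with the same constant.

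For (2), write
$$\frac{\mathcal{K}_s(\m)}{s}=\frac{1}{s|\Gamma(-s)|}\int_0^\infty G_{t,N}(\m)\,t^{-s-1}\,\d t, \qquad s|\Gamma(-s)|=\Gamma(1-s)\to1 \text{ as } s\to0^+.$$
It then remains to show that $\int_0^\infty G_{t,N}(\m)t^{-s-1}\,\d t\to\int_0^\infty G_{t,N}(\m)t^{-1}\,\d t$. This follows from dominated convergence for $0<s<1/2$, with dominating function $C r^{-N-2}$ on $(0,1]$ and $Ct^{-N/2-1}$ on $[1,\infty)$. Both are integrable, and each bounds $G_{t,N}(\m)t^{-s-1}$ on its interval by the bounds from step (1).

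The main obstacle is this uniform integrable majorant near $t=0$: one needs the factor $t$ from the Gaussian-type bound \eqref{p3} to cancel the $t^{-1-s}$ singularity. Once that is in place, all three parts reduce to elementary integrals.
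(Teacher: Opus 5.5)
Your argument is correct after one small repair, and for part (1) it is more self-contained than the paper's.

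\textbf{The repair.} It is in the dominated-convergence step for (2). On $(0,1]$ the function $Cr^{-N-2}$ does not dominate $G_{t,N}(\m)t^{-s-1}$. Indeed, \eqref{p3} only gives $G_{t,N}(\m)t^{-s-1}\le Cr^{-N-2}t^{-s}$, and $t^{-s}>1$ for $t<1$. The factor $t$ coming from \eqref{p3} cancels $t^{-1}$, not the full $t^{-1-s}$, so a mildly unbounded factor $t^{-s}$ remains near $0$. The fix is immediate: for $0<s\le 1/2$ use the majorant $Cr^{-N-2}t^{-1/2}$, which is integrable on $(0,1]$. With that change, the rest of your argument (including the majorant $Ct^{-N/2-1}$ on $[1,\infty)$) goes through.

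\textbf{Comparison with the paper.} For (1) the paper gives no argument and simply cites \cite[Theorem 30]{Ci2023}. You derive (1) directly from \eqref{p3} by splitting the $t$-integral at $t=|\m|^2$. This reproduces exactly the factor $\frac{1}{1-s}+\frac{2}{N+2s}$. At $s=0$ the same computation gives (3) with visibly the same constant $C_N$, whereas the paper obtains (3) from \eqref{p4}.

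For (2) the paper also splits at $t=1$ and uses the same two consequences of \eqref{p3}. Instead of dominated convergence, however, it estimates $\bigl|\mathcal{K}_s(\m)/s-K(\m)\bigr|$ explicitly. It bounds $\int_0^\infty G_{t,N}(\m)|t^{-s}-1|\,\frac{\d t}{t}\le C\frac{s}{1-s}|\m|^{-N-2}+C\frac{s}{1+s}$, and combines this with $|\Gamma(1)-\Gamma(1-s)|\le\frac{s}{1-s}+\frac{2s}{1+s}$ and \eqref{p4}. That yields a rate $O(s)$ that is uniform in $\m\neq\mathbf{0}$. Your dominated-convergence argument gives only pointwise convergence for each fixed $\m$, which is all that (2) asserts.
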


\begin{proof}
(1) See \cite[Theorem 30]{Ci2023} for the proof.

(2) One can write
\begin{align*}
\left|\frac{\mathcal{K}_s(\m)}{s}-{K}(\m)\right|
&\le \left|\frac{1}{\Gamma(1-s)}\int_0^\infty G_{t,N}(\m)\frac{\d t}{t^{s+1}}-\int_0^\infty G_{t,N}(\m)\frac{\d t}{t}\right|\\
&\le \frac{1}{\Gamma(1-s)}\int_0^\infty G_{t,N}(\m)|{t^{-s}}-1|\frac{\d t}{t}+\left|\frac{1}{\Gamma(1-s)}-1\right| \int_0^\infty G_{t,N}(\m)\frac{\d t}{t}.
%&\le \frac{1}{\Gamma(1-s)}\int_0^\infty \frac{C}{t^{N/2}}\min\left\{1,\left(\frac{\sqrt{t}}{|\m|}\right)^{N+2}\right\}\left|\frac{1}{t^{s}}-1\right|\frac{\d t}{t}
% +\left|\frac{1}{\Gamma(1-s)}-1\right| \int_0^\infty G_{t,N}(\m)\frac{\d t}{t}
\end{align*}
On the one hand, we obtain by \eqref{p3} that
\begin{align*}
\int_0^\infty G_{t,N}(\m)|{t^{-s}}-1|\frac{\d t}{t}
%&\le C \int_0^1 \left(\frac{\sqrt{t}}{\sqrt{t}+|\m|}\right)^2\frac{1}{(\sqrt{t}+|\m|)^N}\left(\frac{1}{t^s}-1\right)\frac{\d t}{t}
% +C \int_1^\infty \left(\frac{\sqrt{t}}{\sqrt{t}+|\m|}\right)^2\frac{1}{(\sqrt{t}+|\m|)^N}\left(1-\frac{1}{t^s}\right)\frac{\d t}{t}  \\
&\le C \int_0^1 \frac{t}{|\m|^{N+2}}\left(\frac{1}{t^s}-1\right)\frac{\d t}{t}
 +C \int_1^\infty \frac{1}{t^{N/2}}\left(1-\frac{1}{t^s}\right)\frac{\d t}{t}  \\
&\le C \frac{s}{1-s} \frac{1}{|\m|^{N+2}}+C\frac{s}{1+s}\to0, \quad s\to0^+.
\end{align*}
On the other hand, it follows that
\begin{align*}
|\Gamma(1)-\Gamma(1-s)|
&\le \int_0^\infty e^{-t}|t-t^{1-s}|\frac{\d t}{t}\le \int_0^1 \left(\frac{1}{t^s}-1\right)\d t+2\int_1^\infty \left(\frac{1}{t^2}-\frac{1}{t^{2+s}}\right)\d t \\
&=\frac{s}{1-s}+2\frac{s}{1+s}\to 0, \quad s\to 0^+,
\end{align*}
and from \eqref{p3} that
\begin{align}\label{p4}
 \int_0^\infty G_{t,N}(\m)\frac{\d t}{t}\le C\int_0^\infty \left(\frac{\sqrt{t}}{\sqrt{t}+|\m|}\right)^2\frac{1}{(\sqrt{t}+|\m|)^N}\frac{\d t}{t}=\frac{C}{|\m|^N}.
\end{align}
Based on the above argument, we derive that
$$ \mathcal{K}_s(\m)=K(\m)s+o(s),\quad s\to0^+.$$

%
%Note the integrability of $G_{t,N}(\m)/t$ (see the proof of \cite[(77)]{Ci2023} for example), and $|{t^{-s}}-1|\le 2$ for all $t\ge1$.
%Then invoke the Lebesgue dominated convergence theorem to obtain the desired result.

(3)  The required result follows from \eqref{p4}. This completes the proof.
%\begin{align*}
%{K}(\m)\le \lim_{s\to0^+}\frac{\mathcal{K}_s(\m)}{s}
% \le  \lim_{s\to0^+}\frac{C_N}{|\Gamma(-s)|}\left(\frac{1}{1-s}+\frac{2}{N+2s}\right)\frac{\mathbbm{1}_{\zz^n\setminus\{\mathbf{0}\}} (\m)}{|\m|^{N+2s}}
% =C_N\left(1+\frac{2}{N}\right)\frac{\mathbbm{1}_{\zz^n\setminus\{\mathbf{0}\}} (\m)}{|\m|^{N}}.
%\end{align*}
\end{proof}

\subsection{\bf From one-dimension to higher-dimension}
In this subsection, we go further to consider the $N$-dimensional version of Theorem \ref{thm1} ($N\ge2$).

In order to generalize Theorem \ref{thm1} to the $N$-dimensional case,
the main difficulty arises from the lack of the explicit representation for the discrete kernel $\mathcal{K}_s(\m)$.
In Theorem \ref{thm1}, all constants derived from $\mathcal{K}^h_s(m)$ such as $C_h(s)$, $c_h^\prime(0)$, $D^h_{\ell}(s)$ and $d_\ell$ are clear and unambiguous.
However, in the $N$-dimensional setting, the discrete kernel $\mathcal{K}_s(\m)$ is determined by
\begin{align*}
\mathcal{K}_s(\mathbf{m})
&=\frac{1}{|\Gamma(-s)|}\int_0^\infty G_{t,N}(\m)\frac{\d t}{t^{s+1}}\mathbbm{1}_{\zz^n\setminus\{\mathbf{0}\}} (\m) \\
&=\frac{1}{|\Gamma(-s)|}\int_0^\infty \prod_{k=1}^N(e^{-2t}I_{m_k}(2t))\frac{\d t}{t^{s+1}}\mathbbm{1}_{\zz^n\setminus\{\mathbf{0}\}} (\m),\quad \m=(m_1,\cdots,m_N).
\end{align*}
We do not how to calculate accurately the product of $I_{m_k}(2t)$ with $k=1,\cdots,N$,
though it can be estimated as
$$
\frac{\Gamma({\frac{m_1+\cdots+m_N}{N}}+1)^N}{\Gamma(m_1+1)\cdots\Gamma(m_N+1)}\left(I_{\frac{m_1+\cdots+m_N}{N}}(2t)\right)^N
\le I_{m_1}(2t)\cdots I_{m_N}(2t)
\le \left(I_{\frac{m_1+\cdots+m_N}{N}}(2t)\right)^N
$$
for every $m_1,\cdots m_N>-1$; see \cite[Proposition 1]{Ci2023}.
With this estimate in hand, we can calculate the upper/lower bound for $G_{t,N}(\m)$ and hence for $\mathcal{K}_s(\mathbf{m})$,
but not the explicit representation.\footnote{We thank Prof. \'{O}. Ciaurri for reminding us this fact.}
Therefore we have to surmise some crucial constants similar as $c_h^\prime(0)$ and $d_\ell$ in Theorem \ref{thm1}.
In the following paragraph, we use the same notation as in the proof of Theorem \ref{thm1}.

The Step 1 in the proof of Theorem \ref{thm1} tells us that the term $D_\ell(s)$ in the $N$-dimensional setting should be defined by
$$
D_\ell(s)=\sum_{|\m|\ge4\ell}\mathcal{K}_s(\m).
$$
Since the discrete kernel $\mathcal{K}_s(\m)$ does not admit an explicit representation, we can not even verify that $D_\ell(0)$ is equal to one.
However, inspired by the Step 5 in the proof of Theorem \ref{thm1}, we expect that
$$
\frac{D_\ell(s)-1}{s}=-\sum_{0<|\m|<4\ell}K(\m)+o(1),\quad s\to 0^+.
$$
Unfortunately, this behaviour of $D_\ell(s)$ at zero is valid for $\zz$, not for $\zz^N$ ($N\ge2$).
Here, we should introduce a corrector to find the correct form of the above identity.
More precisely, hypothesize for the moment that
$$
\frac{D_\ell(s)-1}{s}=-\sum_{0<|\m|<4\ell}K(\m)+\rho_N+o(1)=d_\ell(s)+o(1),\quad s\to 0^+,
$$
 where the corrector $\rho_N$ depends on $N$ only to be determined later.
The main difficulty of showing the identity above arises from the lack of the explicit representation for the discrete kernel.
Note that
$$ \sum_{\m \in \mathbb{Z}^N} G_{t,N}(\m)=1$$
is valid by the conservation property of the heat kernel (see \cite[(21)]{Ci2023}).
A careful analysis gives rise to
\begin{align*}
\frac{D_\ell(s)-1}{s}
%&=\frac{1}{\Gamma(1-s)}\int_0^\infty \sum_{|\m|\ge4\ell} G_{t,N}(\m)\frac{\d t}{t^{s+1}}-\frac1s\\
&=\frac{1}{\Gamma(1-s)}\int_0^1 \sum_{|\m|\ge4\ell} G_{t,N}(\m)\frac{\d t}{t^{s+1}}
 +\frac{1}{\Gamma(1-s)}\int_1^\infty \sum_{|\m|\ge4\ell} G_{t,N}(\m)\frac{\d t}{t^{s+1}}-\frac1s  \\
&=\frac{1}{\Gamma(1-s)}\int_0^1 \sum_{|\m|\ge4\ell} G_{t,N}(\m)\frac{\d t}{t^{s+1}}
 -\frac{1}{\Gamma(1-s)}\int_1^\infty \sum_{|\m|<4\ell} G_{t,N}(\m)\frac{\d t}{t^{s+1}} +\frac{1}{s\Gamma(1-s)}-\frac1s  \\
&=\frac{1}{\Gamma(1-s)}\int_0^1 \sum_{|\m|\ge4\ell} G_{t,N}(\m)\frac{\d t}{t^{s+1}}
 -\frac{1}{\Gamma(1-s)}\int_1^\infty \sum_{|\m|<4\ell} G_{t,N}(\m)\frac{\d t}{t^{s+1}} +J_1  \\
&=\frac{1}{\Gamma(1-s)}\int_0^1 \sum_{|\m|\ge4\ell} G_{t,N}(\m)\frac{\d t}{t^{s+1}}-\int_0^1 \sum_{|\m|\ge4\ell} G_{t,N}(\m)\frac{\d t}{t}
 +\int_0^1 \sum_{|\m|\ge4\ell} G_{t,N}(\m)\frac{\d t}{t} \\
&\ -\frac{1}{\Gamma(1-s)}\int_1^\infty \sum_{|\m|<4\ell} G_{t,N}(\m)\frac{\d t}{t^{s+1}}+\int_1^\infty \sum_{|\m|<4\ell} G_{t,N}(\m)\frac{\d t}{t}
  -\int_1^\infty \sum_{|\m|<4\ell} G_{t,N}(\m)\frac{\d t}{t} +J_1 \\
&=J_2 +\int_0^1 \sum_{|\m|\ge4\ell} G_{t,N}(\m)\frac{\d t}{t} +J_3  -\int_1^\infty \sum_{|\m|<4\ell} G_{t,N}(\m)\frac{\d t}{t} +J_1 \\
%&=J_2 +\int_0^1 \sum_{\m\in\zz^N,\m\neq\mathbf{0}} G_{t,N}(\m)\frac{\d t}{t}-\int_0^1 \sum_{0<|\m|<4\ell} G_{t,N}(\m)\frac{\d t}{t} \\
%&\ +J_3  -\int_1^\infty \sum_{|\m|<4\ell} G_{t,N}(\m)\frac{\d t}{t} +J_1 \\
&=J_2 +\int_0^1 \sum_{\m\in\zz^N,\m\neq\mathbf{0}} G_{t,N}(\m)\frac{\d t}{t}-\int_0^1 \sum_{0<|\m|<4\ell} G_{t,N}(\m)\frac{\d t}{t} \\
&\ +J_3  -\int_1^\infty G_{t,N}(\mathbf{0})\frac{\d t}{t}-\int_1^\infty \sum_{0<|\m|<4\ell} G_{t,N}(\m)\frac{\d t}{t} +J_1 \\
&=- \sum_{0<|\m|<4\ell} K(\m)+\sum_{\m\in\zz^N,\m\neq\mathbf{0}}\int_0^1 G_{t,N}(\m)\frac{\d t}{t}-\int_1^\infty G_{t,N}(\mathbf{0})\frac{\d t}{t} +J_1+J_2+J_3.
\end{align*}
On the one hand, we claim that the last two terms $J_2$ and  $J_3$ converge to zero as $s\to0^+$.
Indeed, by \eqref{p3}, we have
$$
\sum_{|\m|\ge4\ell} G_{t,N}(\m)
\le C\sum_{|\m|\ge4\ell} \left(\frac{\sqrt{t}}{\sqrt{t}+|\m|}\right)^2\frac{1}{(\sqrt{t}+|\m|)^N}
\le C\sum_{|\m|\ge4\ell} \frac{t}{|\m|^{N+2}}\le C\frac{t}{\ell^2}\le Ct
$$
and hence
\begin{align*}
|J_2|
&\le\frac{1}{\Gamma(1-s)}\int_0^1 \sum_{|\m|\ge4\ell} G_{t,N}(\m)\left(\frac{1}{t^{s}}-1\right)\frac{\d t}{t}
 +\left|\frac{1}{\Gamma(1-s)}-1\right|\int_0^1 \sum_{|\m|\ge4\ell} G_{t,N}(\m)\frac{\d t}{t} \\
&\le \frac{C}{\Gamma(1-s)}\int_0^1 \left(\frac{1}{t^{s}}-1\right)\d t
 +C\left|\frac{1}{\Gamma(1-s)}-1\right|\int_0^1 \d t \\
& = \frac{C}{\Gamma(1-s)}\frac{s}{1-s}+C\left|\frac{1}{\Gamma(1-s)}-1\right|\to0, \quad s\to0^+.
\end{align*}
The proof of the term $J_3$ is similar, and is left to the readers.
On the other hand, the first term $J_1$ can be estimated as
$$
\lim_{s\to0^+} J_1=\lim_{s\to0^+}\frac{1-\Gamma(1-s)}{s\Gamma(1-s)}=\lim_{s\to0^+}\frac{1-\Gamma(1-s)}{s}=\lim_{s\to0^+}\Gamma^\prime(1-s)=\Gamma^\prime(1)=-\gamma
$$
by the L'Hospital's rule.
Therefore the corrector $\rho_N$ can be chosen as
\begin{align}\label{q4}
\rho_N=\sum_{\m\in\zz^N,\m\neq\mathbf{0}}\int_0^1 G_{t,N}(\m)\frac{\d t}{t}-\int_1^\infty G_{t,N}(\mathbf{0})\frac{\d t}{t}-\gamma.
\end{align}
The remainder of the argument is easy and analogous to that in Theorem \ref{thm1}, and is left to the readers.

Based on the above argument, we derive the second result of this note.
\begin{thm}\label{thm2}
Let $f$ be a real-value function on $\zz^N$ with bounded support.
Then we have
$$
\log(-\Delta_N)f(\n)=\left.\frac{\d }{\d s}(-\Delta_N)^s\right|_{s=0^+}f(\n)=-\sum_{\m\in\zz^N, \m\neq \n} K(\n-\m)f(\m)+\rho_Nf(\n),
$$
where the discrete kernel $K(\m)$ and the corrector $\rho_N$ are as in  \eqref{q3} and \eqref{q4}, respectively.
Moreover, we have $\log(-\Delta_N)f\in \ell^\infty(\mathbb{Z}^N)$ and
$$\lim_{s\to0^+}\frac{(-\Delta_N)^sf-f}{s}=\log(-\Delta_N)f$$
in $L^\infty$-sense.
\end{thm}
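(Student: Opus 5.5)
The plan is to transplant the ten-step argument of Theorem \ref{thm1} to $\zz^N$. In place of the explicit Gamma-function identities we use Proposition \ref{prop2} and the expansion of $(D_\ell(s)-1)/s$ computed just before the statement. Fix $\ell$ with $\supp f\subset\{|\m|<\ell\}$ and split, exactly as in Step 1,
\[
(-\Delta_N)^sf(\n)=A_\ell(s,\n)+D_\ell(s)f(\n),\qquad D_\ell(s)=\sum_{|\m|\ge4\ell}\mathcal{K}_s(\m),
\]
where $A_\ell(s,\n)$ collects the finite sum over $0<|\n-\m|<4\ell$ of $\mathcal{K}_s(\n-\m)(f(\n)-f(\m))$ minus the tail sum of $\mathcal{K}_s(\n-\m)f(\m)$ over $|\n-\m|\ge4\ell$.

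\textbf{Local part.} For $|\n|<2\ell$ the tail vanishes, and $A_\ell(s,\n)$ is a finite sum. Proposition \ref{prop2}(2) then gives
\[
\frac{A_\ell(s,\n)}{s}\to a_\ell(\n)=\sum_{0<|\n-\m|<4\ell}K(\n-\m)\bigl(f(\n)-f(\m)\bigr),
\]
and the convergence is uniform because only finitely many $\n$ and finitely many kernel values are involved.

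\textbf{Far part.} For $|\n|\ge2\ell$ we have $f(\n)=0$ and $|\n-\m|>\ell$ on $\supp f$. Proposition \ref{prop2}(1) gives $\mathcal{K}_s(\m)/s\le C_N'|\m|^{-N-2s}$ uniformly for small $s$, since $s/|\Gamma(-s)|=1/\Gamma(1-s)$ is bounded. Hence $|A_\ell(s,\n)/s|\le C\|f\|_{\ell^1}\ell^{-N}$. Proposition \ref{prop2}(3) gives the same bound for $|a_\ell(\n)|$.

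\textbf{Diagonal part.} Here the computation preceding the theorem, with $J_1\to-\gamma$ and $J_2,J_3\to0$, gives
\[
\frac{D_\ell(s)-1}{s}\to d_\ell=-\sum_{0<|\m|<4\ell}K(\m)+\rho_N .
\]
The step I expect to be the main obstacle is making this rigorous, and in particular checking that $\rho_N$ in \eqref{q4} is finite and that $J_3\to0$. For $J_3$ the integrand is $\sum_{|\m|<4\ell}G_{t,N}(\m)t^{-1}|t^{-s}-1|$ on $(1,\infty)$. By \eqref{p3} we have $G_{t,N}(\m)\le Ct^{-N/2}$, so this is dominated by $C\ell^N t^{-N/2-1}$, which is integrable for $N\ge1$; dominated convergence then applies. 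The constant $|1/\Gamma(1-s)-1|$ is handled as for $J_2$. The same bound shows $\int_1^\infty G_{t,N}(\mathbf 0)\,dt/t<\infty$. For the first term of \eqref{q4}, \eqref{p3} gives $\sum_{\m\ne\mathbf0}G_{t,N}(\m)\le C t\sum_{\m\ne0}|\m|^{-N-2}\le Ct$ for $t\le1$, so $\int_0^1 \sum_{\m\ne\mathbf 0}G_{t,N}(\m)\,dt/t<\infty$, and Fubini justifies interchanging sum and integral. I would also justify the interchanges of sum and integral in the expansion of $(D_\ell(s)-1)/s$ by Tonelli, since all terms are nonnegative, together with the conservation law $\sum_{\m}G_{t,N}(\m)=1$.

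\textbf{Assembly.} As in Step 6,
\[
a_\ell(\n)+d_\ell f(\n)=-\sum_{\m\ne\n}K(\n-\m)f(\m)+\rho_Nf(\n)+g(\n),\qquad g(\n)=\sum_{|\n-\m|\ge4\ell}K(\n-\m)f(\m).
\]
By Proposition \ref{prop2}(3), $|g(\n)|\le C\|f\|_{\ell^1}\ell^{-N}$. The Step 8 decomposition then yields
\[
\limsup_{s\to0^+}\Bigl\|\tfrac{(-\Delta_N)^sf-f}{s}-\log(-\Delta_N)f\Bigr\|_{\ell^\infty}\le C\|f\|_{\ell^1}\ell^{-N},
\]
and letting $\ell\to\infty$ gives the $L^\infty$ convergence and the pointwise formula. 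Finally, $\log(-\Delta_N)f\in\ell^\infty$ follows from $K(\m)\le C_N(1+2/N)$ for $\m\ne\mathbf0$, which gives $\|\log(-\Delta_N)f\|_{\ell^\infty}\le C\|f\|_{\ell^1}+|\rho_N|\|f\|_{\ell^\infty}$.
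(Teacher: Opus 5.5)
Your proposal is correct and follows the paper's own route: the expansion of $(D_\ell(s)-1)/s$ with $J_1\to-\gamma$ and $J_2,J_3\to0$, followed by the Step 1--10 scheme of Theorem \ref{thm1}. The paper leaves that scheme to the reader, and you fill it in (finiteness of $\rho_N$, dominated convergence for $J_3$, and the $\ell^{-N}$ far-field and tail bounds from Proposition \ref{prop2}). One harmless slip: the identity you need is $s|\Gamma(-s)|=\Gamma(1-s)$, which gives $\mathcal{K}_s(\m)/s\le \frac{C_N}{\Gamma(1-s)}\bigl(\frac{1}{1-s}+\frac{2}{N+2s}\bigr)|\m|^{-N-2s}$; you wrote $s/|\Gamma(-s)|=1/\Gamma(1-s)$, but the uniform bound you draw from it is correct.
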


\begin{rem}
In the Euclidean space $\mathbb{R}^d$, Chen-Weth \cite{CW2019} proved that
\begin{align}\label{q5}
\left.\frac{\d}{\d s}(-\Delta)^s\right|_{s=0^+} f(x)=c_d\int_{\mathbb{R}^d}\frac{f(x)\mathbbm{1}_{B_1(x)}(y)-f(y)}{|x-y|^{d}}\d y+\rho_df(x);
\end{align}
see Theorem A.
The observant reader might notice this result in the continuous setting is more or less different from our Theorem \ref{thm2}.
The term $f(x)\mathbbm{1}_{B_1(x)}(y)$ appears in \eqref{q5},
but there is no the corresponding discrete term in our result,
and hence the derivative of the fractional discrete Laplacian can be regard as an exotic discrete Riesz potential.
In fact, on the one hand, the open ball $B_1(\n)=\{\m\in\zz^N:\ |\m-\n|<1\}$ in the discrete setting tells us $\m=\n$.
On the other hand, the subscript in the summation indicates $\m\neq\n$.
Therefore we reformulate $\log(-\Delta_N)f(\n)$ in some sense as
$$
\log(-\Delta_N)f(\n)=\sum_{\m\in\zz^N, \m\neq \n} K(\n-\m)(f(\n)\mathbbm{1}_{B_1(\n)}(\m)-f(\m))+\rho_Nf(\n),
$$
which is the discrete version of \eqref{q5}.
\end{rem}

\section{\bf A final remark: the discrete Riesz potential}\label{s4}

In Sections \ref{s2} and \ref{s3}, we consider the asymptotic behavior for the \textit{positive} power of the discrete Laplacian
\begin{align}\label{q7}
(-\Delta_N)^sf(\n)=f(\n)+[-K\ast f(\n)+\rho_Nf(\n)]s+o(s),\quad s\to0^+.
\end{align}
In fact, we can take the power $s$ to be negative.
The \textit{negative} power of the discrete Laplacian is well known as the discrete Riesz potential.
It can be defined by the heat semigroup
$$
(-\Delta_N)^s=\frac{1}{\Gamma(-s)}\int_0^\infty e^{t\Delta_N}\frac{\d t}{t^{s+1}},\quad -N/2<s<0.
$$
When $N=1$, the discrete  kernel of $(-\Delta_h)^s$ with $h>0$ can be expressed as
$$
\mathcal{K}_s(m)=\pi^{-1/2}(-s)\left(\frac{2}{h}\right)^{2s}\frac{\Gamma(1/2+s)}{\Gamma(1-s)}\frac{\Gamma(|m|-s)}{\Gamma(|m|+1+s)} \mathbbm{1}_{\zz\setminus\{0\}}(m)
$$
through the Gamma function; see \cite[Theorem 1.3]{CRSTV2018} for more details.
When $N\ge2$, the discrete kernel of $(-\Delta_N)^s$ is not explicit, but given by
$$
\mathcal{K}_s(\m)=\frac{1}{\Gamma(-s)}\int_0^\infty G_{t,N}(\m)\frac{\d t}{t^{s+1}}\mathbbm{1}_{\zz^N\setminus\{\mathbf{0}\}} (\m)
$$
similar to \eqref{q6}; see \cite[Theorem 14]{Ci2023} for more details.

A basic question arises from the discrete Riesz potential above:
\begin{enumerate}
  \item[$\bullet$] Question:  Can we derive the asymptotic behavior for the \textit{negative} power of the discrete Laplacian similar as \eqref{q7}?
  %\item[$\bullet$] Question 2: Can we introduce a new Morrey function via the variable Gaussian heat kernel on a general underlying space?
%  \item[$\bullet$] Question 3: Can we introduce a new Morrey function via the variable Gaussian heat kernel on a general underlying space?
\end{enumerate}

Fortunately, the answer for this question is positive and is the same as the positive power case; see \cite{Ch2023} for the continuous case.
Here, we present the corresponding result for $(-\Delta_N)^s$ with $-N/2<s<0$ only ,
whose proof is analogous to that in Theorems \ref{thm1} and \ref{thm2}, and left to the readers.

\begin{thm}\label{thm3}
Let $f$ be a real-value function on $\zz^N$ ($N\ge1$) with bounded support.
Then we have
$$
\log(-\Delta_N)f(\n)=\left.\frac{\d }{\d s}(-\Delta_N)^s\right|_{s=0^-}f(\n)=-\sum_{\m\in\zz^N, \m\neq \n} K(\n-\m)f(\m)+\rho_Nf(\n),
$$
where the discrete kernel $K(\m)$ and the corrector $\rho_N$ are as in  \eqref{q3} and \eqref{q4}, respectively.
Moreover, we have $\log(-\Delta_N)f\in \ell^\infty(\mathbb{Z}^N)$ and
$$\lim_{s\to0^-}\frac{(-\Delta_N)^sf-f}{s}=\log(-\Delta_N)f$$
in $L^\infty$-sense.
\end{thm}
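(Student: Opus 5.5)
The plan is to work directly with the convolution representation of the negative power, which is simpler than the positive case because no difference $f(\n)-f(\m)$ and no truncation at $4\ell$ are needed. For $-N/2<s<0$ and $f$ with bounded support,
$$
(-\Delta_N)^sf(\n)=\sum_{\m\in\zz^N}\mathcal{K}_s(\n-\m)f(\m),\qquad \mathcal{K}_s(\m)=\frac{1}{\Gamma(-s)}\int_0^\infty G_{t,N}(\m)\frac{\d t}{t^{s+1}},
$$
and now the diagonal term $\m=\n$ is present. I write $\varepsilon=-s>0$ and note that $1/\Gamma(-s)=1/\Gamma(\varepsilon)=\varepsilon+\gamma\varepsilon^2+O(\varepsilon^3)$. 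It therefore suffices to prove two expansions, each with an error that is $o(s)$ \emph{uniformly} in $\m$:
$$
\mathcal{K}_s(\m)=-sK(\m)+o(s)\quad(\m\neq\mathbf{0}),\qquad \mathcal{K}_s(\mathbf{0})=1+s\rho_N+o(s).
$$
Granting these, the bounded support of $f$ gives
$$
\Bigl\|\frac{(-\Delta_N)^sf-f}{s}-\log(-\Delta_N)f\Bigr\|_{\ell^\infty}\le\|f\|_{\ell^1}\sup_{\m}\Bigl|\frac{\mathcal{K}_s(\m)-\delta_{\mathbf{0}}(\m)}{s}-\bigl(\rho_N\delta_{\mathbf{0}}(\m)-K(\m)\bigr)\Bigr|\to0 .
$$
Membership $\log(-\Delta_N)f\in\ell^\infty$ follows as in Step 9 of Theorem \ref{thm1}, using $K(\m)\le C_N(1+2/N)$ from Proposition \ref{prop2}(3).

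\emph{Off-diagonal terms.} I will imitate Proposition \ref{prop2}(2) with $t^{-s}=t^{\varepsilon}$. The quantity to control is
$$
\bigl|\mathcal{K}_s(\m)/\varepsilon-K(\m)\bigr|\le\frac{1}{\varepsilon\Gamma(\varepsilon)}\int_0^\infty G_{t,N}(\m)\,|t^{\varepsilon}-1|\frac{\d t}{t}+\Bigl|\frac{1}{\varepsilon\Gamma(\varepsilon)}-1\Bigr|K(\m).
$$
\begin{itemize}
\item On $(0,1)$, \eqref{p3} gives $G_{t,N}(\m)\le Ct|\m|^{-N-2}\le Ct$, and $\int_0^1(1-t^\varepsilon)\,\d t=O(\varepsilon)$.
\item On $(1,\infty)$, $G_{t,N}(\m)\le Ct^{-N/2}$, and $\int_1^\infty t^{-N/2-1}(t^\varepsilon-1)\,\d t=O(\varepsilon)$ for $\varepsilon<N/4$.
\end{itemize}
Both bounds are uniform in $\m\neq\mathbf{0}$, and \eqref{p4} handles the last term. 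For $N=1$ one may alternatively use the explicit Gamma formula together with Proposition \ref{prop1}.

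\emph{Diagonal term.} This is the main computation, and it mirrors the derivation of \eqref{q4}. On $(0,1)$ I use conservation, $G_{t,N}(\mathbf{0})=1-\sum_{\m\neq\mathbf{0}}G_{t,N}(\m)$, to split
$$
\int_0^\infty G_{t,N}(\mathbf{0})t^{\varepsilon-1}\,\d t=\frac1\varepsilon-\int_0^1\sum_{\m\neq\mathbf{0}}G_{t,N}(\m)t^{\varepsilon-1}\,\d t+\int_1^\infty G_{t,N}(\mathbf{0})t^{\varepsilon-1}\,\d t=\frac1\varepsilon-A(\varepsilon)+B(\varepsilon).
$$
Here $B(\varepsilon)$ converges because $G_{t,N}(\mathbf{0})\lesssim t^{-N/2}$ and $\varepsilon<N/2$. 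Dominated convergence, using the bound $\sum_{\m\neq\mathbf{0}}G_{t,N}(\m)\le Ct$ from the $J_2$ estimate with $\ell$ replaced by $1$, yields
$$
A(\varepsilon)\to A=\sum_{\m\neq\mathbf{0}}\int_0^1 G_{t,N}(\m)\frac{\d t}{t},\qquad B(\varepsilon)\to B=\int_1^\infty G_{t,N}(\mathbf{0})\frac{\d t}{t}.
$$
Since $1/(\varepsilon\Gamma(\varepsilon))=1/\Gamma(1+\varepsilon)=1+\gamma\varepsilon+o(\varepsilon)$, this gives
$$
\mathcal{K}_s(\mathbf{0})=1+\gamma\varepsilon+\varepsilon(B-A)+o(\varepsilon)=1+s(A-B-\gamma)+o(s)=1+s\rho_N+o(s),
$$
in agreement with \eqref{q4}.

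The step I expect to be most delicate is this diagonal expansion. The large-time integral is convergent only because of the heat-kernel decay $t^{-N/2}$ in \eqref{p3}, which is where the restriction $s>-N/2$ enters. The result also depends on getting the sign conventions for $\Gamma(-s)$ with $s<0$ exactly right, so that the constant comes out as $-\gamma$ rather than $+\gamma$. By contrast, the uniformity in $\m$ of the off-diagonal errors is routine given \eqref{p3}.
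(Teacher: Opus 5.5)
Your argument is correct. The paper leaves this proof to the reader as ``analogous to Theorems \ref{thm1} and \ref{thm2},'' so the relevant comparison is with that truncation scheme. There one writes the operator in difference form, cuts the kernel at radius $4\ell$, and extracts the coefficient of $f(\n)$ as a tail mass $D_\ell(s)$. Its expansion comes from conservation and the $J_1,J_2,J_3$ splitting, and one finally lets $\ell\to\infty$.

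You instead use that, for $s<0$, the semigroup formula gives a plain convolution whose kernel has a finite diagonal entry. It then suffices to expand every kernel value with an error that is $o(s)$ uniformly in $\m$. Off the diagonal this is the argument of Proposition \ref{prop2}(2) with $t^{-s}=t^{\varepsilon}$; the large-time piece is where $s>-N/2$ enters. On the diagonal, the same conservation identity and split at $t=1$ that produced \eqref{q4} give $\mathcal{K}_s(\mathbf{0})=1+s\rho_N+o(s)$. The sign of $\gamma$ comes out correctly via $1/(\varepsilon\Gamma(\varepsilon))=1/\Gamma(1+\varepsilon)$.

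Your route is shorter and yields the explicit bound $\|f\|_{\ell^1}\sup_{\m}|\cdot|$ with no auxiliary parameter. It also shows why the one-sided derivatives coincide: for $s>0$ the corrector lives in the total off-diagonal mass $\sum_{\m\neq\mathbf{0}}\mathcal{K}_s(\m)$ (after truncation, in $D_\ell(s)$), whereas here it lives in the single diagonal value. The truncation scheme, for its part, is built for the positive case, where there is no diagonal term and the difference structure parallels Chen--Weth.

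Keeping the diagonal term is essential, not cosmetic. The kernel displayed in Section \ref{s4} carries the indicator $\mathbbm{1}_{\zz^N\setminus\{\mathbf{0}\}}$, and with that kernel alone $(-\Delta_N)^sf$ would tend to $0$ rather than to $f$. Deriving the representation directly from the heat-semigroup definition, as you do, is the right reading.

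As a consistency check for $N=1$: the diagonal value is explicitly $\pi^{-1/2}4^{s}\Gamma(1/2+s)/\Gamma(1+s)$. Its derivative at $s=0$ is $\log 4+\psi(1/2)-\psi(1)=0$, so $\rho_1=0$, in agreement with Theorem \ref{thm1} for $h=1$.
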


{\small{
\noindent {\textbf{Acknowledgements.}}
The authors are grateful for the anonymous referee's comments on the original version of this paper.
Bo Li thanks Prof. \'{O}. Ciaurri and  L. Roncal for many helpful discussions about Gamma functions during the preparation of this work.
Bo Li was supported by NNSF of China (12471094,12201250), Zhejiang NSF of China (LQ23A010007), Jiaxing NSF of China (2023AY40003) and Qinshen Scholar Program of Jiaxing University.
Qingze Lin is supported by Guangdong Basic and Applied Basic Research Foundation (2024A1515110227) and STU Scientific Research Initiation Grant (NTF24015T).
Huoxiong Wu was supported by NNSF of China (12171399 \& 12271041).
}}

\noindent
{\bf\large Declarations}\\

\noindent
{\bf Author Contributions}\ \  B. Li, Q. Lin and H. Wu participated equally in the work. All authors reviewed the manuscript.\\

\noindent
{\bf Competing Interests}\ \  The authors declare no competing interests.\\

\noindent
{\bf Ethical Approval}\ \  Not applicable.\\

\noindent
{\bf Availability of data and material}\ \  Data sharing not applicable to this article as no datasets were generated or analyzed during the current study.

 %%%%%%%%%%%%%%%%%%%%%%%%%%%%%%%%%%%%%%%%%%%%%%%%%%%%%%

%%%%%%%%%%%%%%%%%%%%%%%%%%%%%%%%%%%%%%%%%%%%%%%%%%%%%%


\begin{thebibliography}{10}


\bibitem{ACDFGW2019}
W. Ao, H. Chan, A. DelaTorre, M. Fontelos, M. Gonz\'{a}lez, J. Wei,
On higher-dimensional singularities for the fractional Yamabe problem: a nonlocal Mazzeo-Pacard program.
Duke Math. J. 168 (2019), no. 17, 3297-3411.
%
%\bibitem{ACL2022}
%A. Arenas, \'{O}. Ciaurri, E. Labarga,
%Discrete harmonic analysis associated with Jacobi expansions II: the Riesz transform.
%Potential Anal. 57 (2022), no. 4, 501-520.

\bibitem{BFS2018}
B. Barrios, A. Figalli, X. Ros-Oton,
Free boundary regularity in the parabolic fractional obstacle problem. Comm. Pure Appl. Math. 71 (2018), no. 10, 2129-2159.


\bibitem{BMSW2019}
J. Bourgain, M. Mirek, E. Stein, B. Wr\'{o}bel,
Dimension-free estimates for discrete Hardy-Littlewood averaging operators over the cubes in $\zz^d$.
Amer. J. Math. 141 (2019), no. 4, 857-905.

%\bibitem{Bu2023}
%T. Bui,
%Maximal regularity of parabolic equations associated with a discrete Laplacian.
%J. Differential Equations 375 (2023), 277-303.
%
%
%
%\bibitem{Bu2014}
%T. Bui,
%Weighted Hardy spaces associated to discrete Laplacians on graphs and applications.
%Potential Anal. 41 (2014), no. 3, 817-848.


\bibitem{BD2014}
T. Bui, X. Duong,
Hardy spaces associated to the discrete Laplacians on graphs and boundedness of singular integrals.
Trans. Amer. Math. Soc. 366 (2014), no. 7, 3451-3485.


\bibitem{CSS2008}
L. Caffarelli, S. Salsa, L. Silvestre,
Regularity estimates for the solution and the free boundary of the obstacle problem for the fractional Laplacian.
Invent. Math. 171 (2008), no. 2, 425-461.


\bibitem{CS2007}
L. Caffarelli, L. Silvestre,
An extension problem related to the fractional Laplacian.
Comm. Partial Differential Equations 32 (2007), no. 7-9, 1245-1260.


\bibitem{CG2011}
S. Chang, M. Gonz\'{a}lez,
Fractional Laplacian in conformal geometry.
Adv. Math. 226 (2011), no. 2, 1410-1432.


\bibitem{Ch2023}
H. Chen,
Taylor expansions of Riesz convolution and the fractional Laplacians with respect to the order.
arXiv:2307.06198.


\bibitem{CW2019}
H. Chen, T. Weth,
The Dirichlet problem for the logarithmic Laplacian.
Comm. Partial Differential Equations 44 (2019), no. 11, 1100-1139.


\bibitem{CW2021}
H. Chen, T. Weth,
The Poisson problem for the fractional Hardy operator: distributional identities and singular solutions.
Trans. Amer. Math. Soc. 374 (2021), no. 10, 6881-6925.


%\bibitem{CL2022-2}
%T. Chen, F. Liu,
%Endpoint Sobolev regularity of the fractional maximal commutators.
%J. Fourier Anal. Appl. 28 (2022), no. 5, Paper No. 74, 39 pp.
%
%\bibitem{CL2022-1}
%T. Chen, F. Liu,
%Derivative bounds and continuity of maximal commutators.
%Studia Math. 266 (2022), no. 1, 93-119.

\bibitem{CFGW2021}
W. Chen, Z. Fu, L. Grafakos, Y. Wu,
Fractional Fourier transforms on $L^p$ and applications.
Appl. Comput. Harmon. Anal. 55 (2021), 71-96.


\bibitem{CM2023}
W. Chen, L. Ma,
Qualitative properties of solutions for dual fractional nonlinear parabolic equations. J. Funct. Anal. 285 (2023), no. 10, Paper No. 110117, 32 pp.

\bibitem{CDH2016}
Y. Chen, Y. Ding, G. Hong,
Commutators with fractional differentiation and new characterizations of BMO-Sobolev spaces. Anal. PDE 9 (2016), no. 6, 1497-1522.




\bibitem{Ci2023}
\'{O}. Ciaurri,
Harmonic analysis for a multidimensional discrete Laplacian.
arXiv:2312.16642.

\bibitem{CGRTV2017}
\'{O}. Ciaurri, T. Gillespie, L. Roncal, J. Torrea, J. Varona,
Harmonic analysis associated with a discrete Laplacian. J. Anal. Math. 132 (2017), 109-131.


\bibitem{CRSTV2018}
\'{O}. Ciaurri, L. Roncal, P. Stinga, J. Torrea, J. Varona,
Nonlocal discrete diffusion equations and the fractional discrete Laplacian, regularity and applications.
Adv. Math. 330 (2018), 688-738.

%
%
%\bibitem{DG2018}
%A. DelaTorre, M. Gonz\'{a}lez,
%Isolated singularities for a semilinear equation for the fractional Laplacian arising in conformal geometry.
%Rev. Mat. Iberoam. 34 (2018), no. 4, 1645-1678.
%
%\bibitem{FGLWY2023}
%Z. Fu, L. Grafakos, Y. Lin, Y. Wu, S. Yang,
%Riesz transform associated with the fractional Fourier transform and applications in image edge detection.
%Appl. Comput. Harmon. Anal. 66 (2023), 211–235.
%
%
%\bibitem{DLYYZ2023}
%F. Dai, X. Lin, C. Yang, W. Yuan, Y. Zhang,
%Brezis-Van Schaftingen-Yung formulae in ball Banach function spaces with applications to fractional Sobolev and Gagliardo-Nirenberg inequalities.
%Calc. Var. Partial Differential Equations 62 (2023), no. 2, Paper No. 56, 73 pp.

\bibitem{DFX2018}
B. Dong, Z. Fu, J. Xu,
Riesz-Kolmogorov theorem in variable exponent Lebesgue spaces and its applications to Riemann-Liouville fractional differential equations.
Sci. China Math. 61 (2018), no. 10, 1807-1824.

\bibitem{HLY2001}
Y. Han, S. Lu, D. Yang,
Inhomogeneous discrete Calder\'{o}n reproducing formulas for spaces of homogeneous type.
J. Fourier Anal. Appl. 7 (2001), no. 6, 571-600.


\bibitem{HT2024}
X. Hu, L. Tang,
Higher regularity of the free boundary in the obstacle problem for the fractional heat operator.
J. Funct. Anal. 286 (2024), no. 4, Paper No. 110274.


\bibitem{ISMW2007}
A. Ionescu, E. Stein, A. Magyar, S. Wainger,
Discrete Radon transforms and applications to ergodic theory.
Acta Math. 198 (2007), no. 2, 231-298.


\bibitem{JLX2014}
T. Jin, Y. Li, J. Xiong,
On a fractional Nirenberg problem, part I: blow up analysis and compactness of solutions.
J. Eur. Math. Soc. (JEMS) 16 (2014), no. 6, 1111-1171.

\bibitem{JX2014}
T. Jin, J. Xiong,
A fractional Yamabe flow and some applications.
J. Reine Angew. Math. 696 (2014), 187-223.

%\bibitem{JX2015}
%T. Jin, J. Xiong,
%On a fractional Nirenberg problem, Part II: Existence of solutions.
%Int. Math. Res. Not. IMRN 2015, no. 6, 1555-1589.
%
%\bibitem{KL1998}
%J. Kinnunen, P. Lindqvist,
%The derivative of the maximal function.
%J. Reine Angew. Math. 503 (1998), 161-167.

\bibitem{LHZ2023}
P. Li, R. Hu, Z. Zhai,
Strengthened fractional Sobolev type inequalities in Besov spaces.
Potential Anal. 59 (2023), no. 4, 2105-2121.

%\bibitem{LHZ2022}
%P. Li, R. Hu, Z. Zhai,
%Fractional Besov trace/extension-type inequalities via the Caffarelli-Silvestre extension.
%J. Geom. Anal. 32 (2022), no. 9, Paper No. 236, 30 pp.

\bibitem{LLLMS}
B. Li, J. Li, Q. Lin, B. Ma, T. Shen, A revisit to ``On BMO and Carleson measures on Riemannian manifolds''.
Proc. Roy. Soc. Edinburgh Sect. A, 154  (2024), no. 4, 1281–1307.
%https://doi.org/10.1017/prm.2023.58
%
%154, 1281–1307

%\bibitem{LW2019}
%F. Liu, H. Wu,
%A note on the endpoint regularity of the discrete maximal operator.
%Proc. Amer. Math. Soc. 147 (2019), no. 2, 583-596.


\bibitem{LW2017}
F. Liu, H. Wu,
Regularity of discrete multisublinear fractional maximal functions. Sci. China Math. 60 (2017), no. 8, 1461-1476.


\bibitem{LSX2024}
L. Liu, Y. Sun, J. Xiao,
Quasilinear Laplace equations and inequalities with fractional orders.
Math. Ann. 388 (2024), no. 1, 1-60.

%\bibitem{LWYY2019}
%L. Liu, S. Wu, D. Yang, W. Yuan,
%New characterizations of Morrey spaces and their preduals with applications to fractional Laplace equations.
%J. Differential Equations 266 (2019), no. 8, 5118-5167.
%
%
%
%\bibitem{LX2022}
%L. Liu, J. Xiao,
%Divergence \& curl with fractional order.
%J. Math. Pures Appl. (9) 165 (2022), 190-231.
%
%
%
%\bibitem{LXY2020}
%F. Liu, Q. Xue, K. Yabuta,
%Regularity and continuity of the multilinear strong maximal operators.
%J. Math. Pures Appl. (9) 138 (2020), 204-241.


\bibitem{MSW2002}
A. Magyar, E. Stein, S. Wainger,
Discrete analogues in harmonic analysis: spherical averages.
Ann. of Math. (2) 155 (2002), no. 1, 189–208.

\bibitem{SW2000}
E. Stein, S. Wainger,
Discrete analogues in harmonic analysis. II. Fractional integration.
J. Anal. Math. 80 (2000), 335-355.

\bibitem{ST2010}
P. Stinga, J. Torrea,
Extension problem and Harnack's inequality for some fractional operators,
Comm. Partial Differential Equations 35 (2010), no. 11, 2092-2122.

%\bibitem{Zh2015}
%Y. Zhou,
%Fractional Sobolev extension and imbedding.
%Trans. Amer. Math. Soc. 367 (2015), no. 2, 959-979.

%
%\bibitem{BCFR2010}
%J. Betancor, A. Chicco Ruiz, J. Fari\~{n}a, L. Rodríguez-Mesa,
%Odd BMO$(\mathbb{R})$ functions and Carleson measures in the Bessel setting.
%{\it Integral Equations Operator Theory} {\bf 66} (2010), no. 4, 463-494.
%
%\bibitem{BB17}
%S. Biagi, A. Bonfiglioli,
%The existence of a global fundamental solution for homogeneous H\"ormander operators via a global lifting method.
%{\it Proc. Lond. Math. Soc. (3)} {\bf 114} (2017), no. 5, 855-889.
%
%\bibitem{BLU}
%A. Bonfiglioli, E. Lanconelli, F. Uguzzoni,
%\textit{Stratified Lie groups and potential theory for their sub-Laplacians}. Springer Monographs in Mathematics. Springer, Berlin, 2007. xxvi+800 pp.
%
%\bibitem{CY}
%H. Cao, S. Yau, Gradient estimates, Harnack inequalities and estimates for heat kernels of the sum of squares of vector fields.
%{\it Math. Z.} {\bf 211} (1992), no. 3, 485-504.
%
%\bibitem{CJKS}
%T. Coulhon, R. Jiang, P. Koskela, A. Sikora,
%Gradient estimates for heat kernels and harmonic functions.
%{\it J. Funct. Anal.} {\bf 278} (2020), no. 8, 108398, 67 pp.
%
%
%\bibitem{DGMTZ}
%J. Dziuba\'{n}ski, G. Garrig\'{o}s, T. Martínez, J. Torrea, J. Zienkiewicz,
%BMO spaces related to Schr\"{o}dinger operator with potential satisfying reverse H\"{o}lder inequality, {\it Math. Z.} {\bf 249} (2005) 329-356.
%
%\bibitem{DYZ}
%X. Duong, L. Yan, C. Zhang,
%On characterization of Poisson integrals of Schrödinger operators with BMO traces.
%\textit{J. Funct. Anal.} \textbf{266} (2014), no. 4, 2053-2085.
%
%\bibitem{FJN} E. Fabes, R. Johnson and U. Neri,
%Spaces of harmonic functions representable by Poisson integrals of functions in BMO and $L_{p, \lambda}$.
%\textit{Indiana Univ. Math. J.} \textbf{ 25} (1976), 159--170.
%
%\bibitem{FN}
%E. Fabes, U. Neri,
%Characterization of temperatures with initial data in BMO.
%\textit{Duke Math. J.} \textbf{42} (1975), no. 4, 725-734.
%
%\bibitem{FN1}
%E. Fabes, U. Neri,
%Dirichlet problem in Lipschitz domains with BMO data.
%\textit{Proc. Amer. Math. Soc.} \textbf{78} (1980), 33--39.
%
%\bibitem{FS}
%C. Fefferman, E. Stein,
%$H^p$ spaces of several variables.
%{\it Acta Math}. {\bf 129} (1972), 137--195.
%
%\bibitem{Fol75}
%G. Folland, Subelliptic estimates and function spaces on nilpotent Lie groups.
%{\it Ark. Mat. }{\bf 13} (1975), no. 2, 161-207.
%
%\bibitem{Fol77}
%G. Folland, On the Rothschild-Stein lifting theorem.
%{\it Comm. Partial Differential Equations}{\bf 2} (1977), no. 2, 165-191.
%
%\bibitem{FS82}
%G. Folland, E. Stein,
%{\it Hardy spaces on homogeneous groups.}
%Mathematical Notes, 28. Princeton University Press, Princeton, N.J.; University of Tokyo Press, Tokyo, 1982. xii+285 pp.
%
%\bibitem{Hor67}
%L. H\"ormander, Hypoelliptic second order differential equations.
%{\it Acta Math.} {\bf 119} (1967), 147-171.
%
%\bibitem{Jerison}
%D. Jerison and A. S\'{a}nchez-Calle, Estimates for the
%heat kernel for a sum of squares of vector fields,
%Indiana Univ. Math. J. {\bf 35} (1986), 835-854.
%
%\bibitem{JL}
%R. Jiang, B. Li,
%Dirichlet problem for the Schr\"odinger equation with boundary value in BMO space.
%{\it Sci. China Math.} {\bf 65} (2022), no. 7, 1431-1468.
%
%\bibitem{LH}
%H. Li,
%Estimations $L^p$ des op\'{e}rateurs de Schr\"odinger sur les groupes nilpotents,
%\textit{J. Funct. Anal.} {\bf 161} (1999) 152-218.
%
%\bibitem{LY}
%P. Li, S. Yau,
%On the parabolic kernel of the Schrödinger operator.
%{\it Acta Math.} {\bf 156} (1986), no. 3-4, 153-201.
%
%\bibitem{LL}
%C. Lin, H. Liu,
%$BMO_L(\mathbb{H}^n)$ spaces and Carleson measures for Schr\"odinger operators.
%\textit{Adv. Math.} {\bf 228} (2011), no. 3, 1631-1688.
%
%\bibitem{LIN}
%Q. Lin, Gradient estimates for Schr\"odinger operators with characterizations of $BMO_{\mathcal{L}}$ on Heisenberg groups.
%\textit{ Proc. Amer. Math. Soc.} {\bf 151} (2023), no. 5, 2127-2142.
%
%\bibitem{RS76}
%L. Rothschild, E. Stein, Hypoelliptic differential operators and nilpotent groups.
%{\it Acta Math.} {\bf 137} (1976), no. 3-4, 247-320.
%
%\bibitem{St1970}
%E. Stein,
%\textit{Topics in Harmonic Analysis Related to the Littlewood-Paley Theory},
%{Annals of Mathematics Studies} \textbf{63}, Princeton Univ. Press, Princeton, NJ, 1970.
%
%
%
%
%\bibitem{St1993}
%E. Stein,
%{\it Harmonic analysis: Real variable methods, orthogonality and oscillatory integrals},
%Princeton Univ. Press, Princeton, NJ, (1993).
%
%
%\bibitem{SW2000}
%E. Stein, S. Wainger,
%Discrete analogues in harmonic analysis. II. Fractional integration,
%\textit{ J. Anal. Math.} {\bf 80} (2000), 335-355.

\end{thebibliography}
\end{document}